\DeclarePairedDelimiter{\diagfences}{(}{)}
\newcommand{\diag}{\operatorname{diag}\diagfences}
\theoremstyle{plain}
\newtheorem{thm}{Theorem}[section]
\newtheorem{prop}[thm]{Proposition}
\theoremstyle{definition}
\newtheorem{defn}{Definition}[section]
\numberwithin{equation}{section}
\newcounter{knot@strings}
\newif\ifknot@draftmode
\tikzset{
  knot/draft mode/.is if=knot@draftmode
}
\newcommand{\strand}[1][]{%
  \stepcounter{knot@strings}%
  \expandafter\def\csname knot@string@opts@\the\value{knot@strings}\endcsname{#1}%
\path[save knot path=\csname knot@string@\the\value{knot@strings}\endcsname]}
\newcommand{\crossing}[2]{%
\expandafter\def\csname knot@crossing@#1\endcsname{#2}}
\begin{document}

\title[Irreducible representations of the Braid Group]{IRREDUCIBLE REPRESENTATIONS OF THE BRAID GROUP $B_3$ in dimension 6}

\author{Taher I. Mayassi \and Mohammad N. Abdulrahim }

\address{Taher I. Mayassi\\
         Department of Mathematics and Computer Science\\
         Beirut Arab University\\
        P.O. Box 11-5020, Beirut, Lebanon}
\email{tim187@student.bau.edu.lb}

\address{Mohammad N. Abdulrahim\\
         Department of Mathematics and Computer Science\\
         Beirut Arab University\\
         P.O. Box 11-5020, Beirut, Lebanon}
\email{mna@bau.edu.lb}

\begin{abstract}
We use $q$-Pascal's triangle to define a family of representations of dimension 6 of the braid group $B_3$ on three strings. Then we give a necessary and sufficient condition for these representations to be irreducible.

\end{abstract}

\maketitle

\medskip

\renewcommand{\thefootnote}{}
\footnote{\textit{Key words and phrases.} Braid group, irreducibility}
\footnote{\textit{Mathematics Subject Classification.} Primary: 20F36.}
\vskip 0.1in 

\section{Introduction}
\bigskip
Braid groups have an important role in many branches of mathematics like Knot Theory and Cryptography. In this work, we study the irreducibility of representations of the Braid group $B_3$ of dimension 6.
In \cite{Al}, a family of representations of $B_3$ of dimension $n+1$ is constructed using $q$-deformed Pascal's triangle.
This family of representations of $B_3$ is a generalization of the representations given by Humphries \cite{Hu} as well as the representations given by I. Tuba and H. Wenzl \cite{TuW}.
For more details, see [1,Theorem 3] and \cite{AR}.
Kosyak mentioned in \cite{Ko} that the irreducibility of the representations constructed by $q$-Pascal's
triangle is still an open problem for dimensions $\geq6$, although some sufficient conditions are given in \cite{Al}.
In our work, we consider these representations and we determine a necessary and sufficient condition for the irreducibility in the case the dimension is precisely 6.\\
In section 2, we use some notations that help us define a family of representations of $B_3$ by $q$-Pascal's triangle (see Theorem 2.1).
In section 3, we specialize the representations defined by Theorem 2.1 to $n=5$, that is of dimension 6, by taking some specific values of some parameters.
We get a subfamily of representations of $B_3$.
Proposition 3.1 proves that these representations have no invariant subspaces of dimension 1.
However, Proposition 3.2, Proposition 3.3 and Proposition 3.4 state necessary and sufficient conditions for the non-existence of invariant subspaces of dimensions 2, 3 and 4 respectively.
Proposition 3.5 gives a sufficient condition for these representations to have no invariant  subspaces of dimension 5.
Our main result is Theorem 3.6, which determines a necessary and sufficient condition for the irreducibility of this family of representations of $B_3$.
In section 4, we consider the cases where the representations are reducible.
Then, we reduce one of these reducible representations to a sub-representation of dimension 4 and we prove that this sub-representation is irreducible (Theorem 4.1).
\section{Notations, Definitions and Basic Theorems}
\bigskip
\begin{defn}\cite{Bi}
The braid group on $n$ strings,  $B_n$, is the abstract group with $n-1$ generators $\sigma_1,\sigma_2,\cdots,\sigma_{n-1}$ satisfying the following relations
$$\sigma_i\sigma_j=\sigma_j\sigma_i\text{ for }|i-j|>1\text{ and }\sigma_i\sigma_{i+1}\sigma_i=\sigma_{i+1}\sigma_{i}\sigma_{i+1}\\
\text{ for }i=1,\cdots,n-2.$$
\end{defn}

\noindent In order to use the $q$-Pascal's triangle, we need the following notations.\\ 

\noindent
\textbf{Notations.} \cite{Al}
For every $(n\times n)$-matrix $M=(m_{ij})$, we set the matrices  $M^\sharp=(m_{ij}^\sharp)$ and $M^s=(m_{ij}^s)$ where $m_{ij}^\sharp=m_{n-i,n-j}$ and
$m_{ij}^s=m_{n-j,n-i}$.\\\\
For $q\in\mathbb{C}\setminus\{0\}$, $n\in\mathbb{N}$, and for all integers $j$ and $r$ such that $j>0$ and $r\geqslant0$ we define the following terms.
$$\begin{array}{l}
(j)_q=1+q+\cdots+q^{j-1},\\\\
(j)!_q=(1)_q(2)_q\cdots(j)_q\text{ and }(0)!_q=1,\\\\
\displaystyle{n \choose r}_q=\dfrac{(n)!_q}{(r)!_q(n-r)!_q},\text{ for all integers }r\text{ and }n\text{ such that } 0\leqslant r\leqslant n,\\\\
q_{r}=q^{\dfrac{(r-1)r}{2}}.
\end{array}
$$

\noindent $(j)_q$ and ${n \choose r}_q$ are called $q$-natural numbers and $q$-binomial coefficients respectively.\\

\begin{defn}(\cite{Al},\cite{Ko})
Let $n$ be a non-negative integer. For all non-zero complex numbers $q$, $\lambda_0,\;\lambda_1,\cdots,\; \lambda_n$, consider the matrices
$$D_n(q)=\diag{q_{r}}_{r=0}^n,\;\; \Lambda_n=\diag{\lambda_0,\lambda_1,\cdots,\lambda_n}\text{ and }A_n(q)=\left(a_{km}\right)_{0\leqslant k,m\leqslant n},$$ 
where $a_{km}={n-k\choose n-m}_q=\frac{(n-k)!_q}{(n-m)!_q(m-k)!_q},$ for $k\leqslant m$ and $a_{km}=0$ for $k>m$.\\\\
We define the following family of $(n+1)\times(n+1)$-matrices 
$$\sigma_1^{\Lambda_n}(q,n)=A_n(q)D_n^\sharp(q)\Lambda_n \text{ and } \sigma_2^{\Lambda_n}(q,n)=\Lambda_n^\sharp D_n(q)\left(\left(A_n\left(q^{-1}\right)\right)^{-1}\right)^\sharp.$$
\end{defn}
Using the definitions and notations above we state the following theorem.
\begin{thm}\cite{Al}
The mapping $B_3\to GL(n+1,\mathbb{C})$ defined by
$$\sigma_1\mapsto \sigma_1^{\Lambda_n}(q,n) \text{ and } \sigma_2\mapsto \sigma_2^{\Lambda_n}(q,n)$$
is a representation of dimension $n+1$ of the braid group $B_3$ provided that $\lambda_i\lambda_{n-i}=c$ for $0\leqslant i\leqslant n$, where $c$ is a constant non-zero complex number.
\end{thm}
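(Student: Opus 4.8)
The plan is to use the standard presentation $B_3=\langle\sigma_1,\sigma_2\mid\sigma_1\sigma_2\sigma_1=\sigma_2\sigma_1\sigma_2\rangle$, so that it suffices to check two things: that the two assigned matrices are invertible, and that they satisfy the single braid relation. Write $A=A_n(q)$, $D=D_n(q)$, so that $\sigma_1^{\Lambda_n}(q,n)=A D^\sharp\Lambda_n$ and $\sigma_2^{\Lambda_n}(q,n)=\Lambda_n^\sharp D\,G$ with $G=((A_n(q^{-1}))^{-1})^\sharp$. Invertibility is immediate: $A$ is upper unitriangular (its diagonal entries are $\binom{n-k}{n-k}_q=1$), $G$ is lower unitriangular (the flip of the inverse of an upper unitriangular matrix), while $D,D^\sharp,\Lambda_n,\Lambda_n^\sharp$ are diagonal with non-zero entries; hence both images lie in $GL(n+1,\mathbb{C})$.

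First I would record the arithmetic content of the hypothesis. Since $\Lambda_n$ and $\Lambda_n^\sharp$ are diagonal with $(\Lambda_n\Lambda_n^\sharp)_{ii}=\lambda_i\lambda_{n-i}$, the condition $\lambda_i\lambda_{n-i}=c$ is exactly $\Lambda_n\Lambda_n^\sharp=cI$, equivalently $\Lambda_n^\sharp=c\Lambda_n^{-1}$. Setting $\Delta_1=D^\sharp\Lambda_n$ and $\Delta_2=\Lambda_n^\sharp D$ (both diagonal, with $\Delta_2=\Delta_1^\sharp$), we have $\sigma_1=A\Delta_1$ and $\sigma_2=\Delta_2 G$, and the inner products collapse: $\Delta_1\Delta_2=D^\sharp(\Lambda_n\Lambda_n^\sharp)D=c\,\Phi$ with $\Phi:=D^\sharp D$. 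Substituting and cancelling the common scalar, the braid relation becomes $A\Phi G A\,\Delta_1=\Delta_2\,G A\Phi G$; writing $K=A\Phi G A$ and $L=G A\Phi G$ (both independent of $\Lambda_n$), this reads $K\Delta_1=\Delta_2 L$.

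The heart of the argument is to upgrade this into a $\Lambda_n$-free statement. Comparing the $(i,j)$ entries gives $K_{ij}\,q_{n-j}\,\lambda_j=\lambda_{n-i}\,q_i\,L_{ij}$, and this must hold for every admissible $\Lambda_n$. Since for $i+j\neq n$ the parameters $\lambda_j$ and $\lambda_{n-i}$ can be varied independently within the constraint $\lambda_r\lambda_{n-r}=c$ (a short case check disposes of the coincidences among the indices), each of $K_{ij}$ and $L_{ij}$ must vanish off the anti-diagonal, while for $i+j=n$ the surviving relation forces $K_{i,n-i}=L_{i,n-i}$. Thus the braid relation is equivalent to the single $\Lambda_n$-free identity that $K=L$ and that this common matrix is anti-diagonal. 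I would prove both facts by a direct computation of the entries of $A\Phi G A$ and $G A\Phi G$, in which the products of $q$-binomial coefficients are collapsed using the $q$-Chu--Vandermonde identity, together with the symmetry $\binom{N}{R}_{q^{-1}}=q^{-R(N-R)}\binom{N}{R}_q$ needed to bring $A_n(q^{-1})$ into play through $G$.

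The main obstacle is precisely this last identity: controlling the triple product of the $q$-Pascal matrix, its $q^{-1}$-inverse, and the diagonal $\Phi$, and verifying that all off-anti-diagonal entries cancel. Conceptually this is expected, since $\sigma_1\sigma_2\sigma_1$ is the image of the half-twist $\Delta\in B_3$, and in the monomial basis underlying the $q$-Pascal construction the half-twist acts by reversing $r\leftrightarrow n-r$ (an anti-diagonal matrix); the condition $\Lambda_n\Lambda_n^\sharp=cI$ is exactly what makes the normalizing diagonal $\Lambda_n$ compatible with this reversal. This viewpoint---realizing the family as a normalized $n$-th symmetric power of the two-dimensional (reduced Burau) representation of $B_3$---also furnishes an alternative, computation-free proof of the braid relation via functoriality of $\mathrm{Sym}^n$, at the cost of first identifying the underlying two-dimensional representation and matching the $q$-binomial normalizations.
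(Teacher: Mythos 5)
The paper itself offers no proof of this theorem: it is imported verbatim from the cited reference [Al], so there is no in-text argument to compare yours against. Judged on its own terms, your reduction is correct and cleanly organized: invertibility via unitriangularity of $A_n(q)$ and of $G=((A_n(q^{-1}))^{-1})^\sharp$; the observation that the hypothesis $\lambda_i\lambda_{n-i}=c$ is exactly $\Lambda_n\Lambda_n^\sharp=cI$, which collapses $\Delta_1\Delta_2$ to $c\,D^\sharp D$; and the consequent reduction of the braid relation to the $\Lambda_n$-free statement that $K=A\Phi GA$ and $L=GA\Phi G$ coincide and are anti-diagonal. The sufficiency direction of that reduction (which is all the theorem needs) is airtight: if $K=L$ is anti-diagonal, then $K_{i,n-i}q_i\lambda_{n-i}=\lambda_{n-i}q_iL_{i,n-i}$ holds for every admissible $\Lambda_n$. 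A sanity check at $n=1$ confirms the target identity, where $K=L=\left(\begin{smallmatrix}0&1\\-1&0\end{smallmatrix}\right)$.

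The genuine gap is that the entire mathematical content of the theorem now sits in the one claim you do not prove: that $A\Phi GA=GA\Phi G$ and that this matrix is supported on the anti-diagonal. You name the tools ($q$-Chu--Vandermonde, the symmetry $\binom{N}{R}_{q^{-1}}=q^{-R(N-R)}\binom{N}{R}_q$) but do not execute the computation, and it is not routine: each entry of the triple product is a double alternating sum of products of $q$-binomials weighted by the powers $q_{r}q_{n-r}$ from $\Phi$, and you would first need an explicit closed form for $(A_n(q^{-1}))^{-1}$ (the signed, $q$-power-twisted inverse of the $q$-Pascal matrix) before any Vandermonde collapse can begin. As written, the argument establishes ``the theorem is equivalent to a $q$-binomial identity'' rather than the theorem. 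Your closing remark points to the efficient repair: identify the construction as a normalized $n$-th symmetric power of the two-dimensional Burau-type representation, for which the braid relation and the anti-diagonal action of the half-twist follow from functoriality and the $n=1$ case; if you take that route, the work shifts to matching the $q$-binomial normalization of $A_n(q)$ with the monomial basis of $\mathrm{Sym}^n$, which should be spelled out rather than asserted.
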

\begin{defn}
A representation is called \textit{subspace irreducible} or \textit{irreducible}, if there are no non-trivial invariant subspaces for all operators of the representation.
A representation is called \textit{operator irreducible}, if there are no non-trivial bounded operators commuting with all operators of the representation.

\end{defn}

For the next theorem, we need to introduce the following operators. For $n,r\in\mathbb{N}$ such that $0\leqslant r\leqslant n$, and for $\lambda=(\lambda_0,\dots,\lambda_n)\in\mathbb{C}^{n+1}$ and $q\in\mathbb{C}$, we define
$$F_{r,n}(q,\lambda)=\exp_{(q)}\left(\sum_{k=0}^{n-1}(k+1)_{q}E_{k\;k+1}\right)-q_{n-r}\lambda_r\left(D_n(q)\Lambda_n^\sharp\right)^{-1},$$
where  $E_{km}$ is a matrix with 1 in the $k,m$ entry and zeros elsewhere ($k,m\in\mathbb{Z})$, and $\exp_{(q)}(X)=\sum_{m=0}^{\infty}(X^m/(m)!_q)$.
For the $(n+1)\times(n+1)$-matrix $C$ over $\mathbb{C}$ and for $0\leqslant i_0<i_1<\cdots<i_r\leqslant n$, $0\leqslant j_0<j_1<\cdots<j_r\leqslant n$, we denote the minors of $C$ with $i_1,i_2,\dots,i_r$ rows and $j_1,j_2,\dots,j_r$ columns by
$$M_{j_1j_2\dots j_r}^{i_1i_2\dots i_r}(C).$$
\begin{thm}\cite{Al}
The representation of the group $B_3$ defined in Theorem 2.1 has the following properties:
\begin{enumerate}
\item for $q=1$, $\Lambda_n=I_{n+1}$ (the identity matrix), it is subspace irreducible in arbitrary dimension $n\in\mathbb{N}$;
\item  for $q=1$, $\Lambda_n=\diag{\lambda_0,\lambda_1,\cdots,\lambda_n}\neq I_{n+1}$, it is operator irreducible if and only if for
any $0\leqslant r \leqslant [\frac{n}{2}]$, there exists 
$0\leqslant i_0<i_1<\dots<i_r\leqslant n$ such that
$$M_{r+1\:r+2\dots n}^{i_0i_1\dots i_{n-r-1}}\left(F_{r,n}^{s}(q,\lambda)\right)\neq0;$$
\item for $q\neq1$, $\Lambda_n=I_n$, it is subspace irreducible if and only if $(n)_q\neq0$.\\
The representation has $[\frac{n+1}{2}]+1$ free parameters.

\end{enumerate}
\end{thm}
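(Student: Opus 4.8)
The plan is to handle the three parts by a common engine---Burnside's theorem together with a close analysis of the flag structure of the two generators---while isolating the arithmetic obstruction that distinguishes the cases. Throughout, write $S_1=\sigma_1^{\Lambda_n}(q,n)$ and $S_2=\sigma_2^{\Lambda_n}(q,n)$ for the two images. Since we work over $\mathbb{C}$, subspace irreducibility is equivalent, by Burnside's theorem, to the assertion that $S_1$ and $S_2$ generate the full matrix algebra $M_{n+1}(\mathbb{C})$, while operator irreducibility is, by Schur's lemma, equivalent to the commutant $\{X:\,XS_1=S_1X,\ XS_2=S_2X\}$ consisting of scalars only. I would organize the whole argument around these two reformulations.

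For part (1), I would set $q=1$ and $\Lambda_n=I_{n+1}$, so that $D_n(1)=I_{n+1}$ and $A_n(1)$ is the ordinary upper-triangular Pascal matrix with entries $\binom{n-k}{n-m}$. The first step is to recognize $S_1$ as the exponential $\exp_{(q)}\!\big(\sum_{k=0}^{n-1}(k+1)_qE_{k\,k+1}\big)$ appearing in the definition of $F_{r,n}$, specialized at $q=1$, so that $S_1$ and $S_2$ are exponentials of a raising and a lowering nilpotent respectively. The decisive step is to show that $\log S_1$, $\log S_2$ and their commutator span a subalgebra isomorphic to $\mathfrak{sl}_2$ acting on $\mathbb{C}^{n+1}$ as the irreducible $(n+1)$-dimensional representation; irreducibility of the $B_3$-image then follows because any invariant subspace would be $\mathfrak{sl}_2$-invariant. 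Equivalently, one checks that the only subspaces invariant under $S_1$ form the standard flag $\langle e_0,\dots,e_k\rangle$ and those invariant under $S_2$ the opposite flag, and that these two flags meet only in $0$ and in the whole space.

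For part (3) I would keep $\Lambda_n=I$ but reinstate $q\neq1$, replacing the exponential by $\exp_{(q)}$ and $A_n(1)$ by $A_n(q)$. The same flag/Burnside mechanism applies, but generating the full algebra now requires that none of the $q$-integers produced when one repeatedly brackets the $q$-raising and $q$-lowering operators degenerate; tracing these brackets should show that the single obstruction is the vanishing of $(n)_q=1+q+\cdots+q^{n-1}$. Concretely, when $(n)_q=0$ I would exhibit the explicit invariant subspace produced by the collapse of the $q$-binomial recursion, and when $(n)_q\neq0$ I would verify that the bracketing closes up to scalars and recovers all of $M_{n+1}(\mathbb{C})$.

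Part (2) is where I expect the real work. With $q=1$ and $\Lambda_n\neq I_{n+1}$ the representation is in general only operator irreducible, so I would analyze the commutant directly: writing $X=(x_{ij})$ and imposing $XS_1=S_1X$, $XS_2=S_2X$ yields a linear system whose solution space must be shown to be one-dimensional exactly when the stated minor condition holds. The point is that $F_{r,n}(q,\lambda)$ is engineered so that a non-scalar intertwiner exists precisely when, for some $r$ with $0\leqslant r\leqslant[\frac{n}{2}]$, every $(n-r)\times(n-r)$ minor $M_{r+1\,r+2\dots n}^{i_0i_1\dots i_{n-r-1}}\!\big(F_{r,n}^{s}(q,\lambda)\big)$ vanishes, i.e. the columns $r+1,\dots,n$ of $F_{r,n}^{s}$ have rank $<n-r$; thus the rank of this block of $F_{r,n}^{s}$ controls the dimension of the commutant. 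Translating the commutant equations into these determinantal rank conditions, and checking that the constraint $\lambda_i\lambda_{n-i}=c$ keeps the two generators genuinely intertwined rather than accidentally degenerate, is the main obstacle. Once it is overcome, parts (1) and (3) fall out as the specializations $\Lambda_n=I$, in which these minors collapse to the single scalar governed by $(n)_q$.
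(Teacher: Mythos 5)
First, a point of reference: the paper does not prove this theorem at all. It is quoted verbatim from Albeverio--Kosyak \cite{Al} as background (note the citation attached to the theorem header), so there is no in-paper proof to compare your argument against. Judged on its own terms, your proposal is a reasonable roadmap but has genuine gaps that prevent it from being a proof.

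The cleanest part is (1): at $q=1$, $\Lambda_n=I_{n+1}$, both generators are unipotent single Jordan blocks (the superdiagonal entries of $A_n(1)$ are $n-k\neq 0$), their invariant-subspace lattices are the standard and opposite flags, and these meet only trivially; that argument closes. The trouble starts with (3). There your claim that ``the single obstruction is the vanishing of $(n)_q$'' does not follow from the flag mechanism you invoke: the superdiagonal entries of $A_n(q)$ are $(n-k)_q$, and if $q$ is a primitive $d$-th root of unity with $d<n$ then $(d)_q=(2d)_q=\cdots=0$ even while $(n)_q\neq 0$. In that case $\sigma_1^{\Lambda_n}(q,n)$ is no longer a single Jordan block (and its diagonal $q_{n-r}$ is no longer constant), so its invariant-subspace lattice is much larger than a flag, and you must still rule out every member of that larger lattice as a common invariant subspace. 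That is exactly where the real content of part (3) lies, and your sketch does not address it. Part (2) is essentially a restatement of the theorem: saying that $F_{r,n}$ ``is engineered so that'' a non-scalar intertwiner exists precisely when the stated minors all vanish is the claim to be proved, not an argument for it; you would need to actually derive the determinantal condition from the commutant equations. Finally, your closing remark that parts (1) and (3) ``fall out'' from part (2) by specializing $\Lambda_n=I$ is logically backwards: part (2) asserts operator irreducibility and parts (1), (3) assert subspace irreducibility, and over $\mathbb{C}$ a scalar commutant does not imply the absence of invariant subspaces (the algebra of upper-triangular $2\times 2$ matrices already has scalar commutant but an obvious invariant line), so no such deduction is available.
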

\begin{thm}\cite{Al}
All representations of the braid group $B_3$ of dimension $\leqslant5$ are the representations defined in Theorem 2.1.
\end{thm}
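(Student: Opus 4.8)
The plan is to deduce this statement from the complete classification of the low-dimensional irreducible representations of $B_3$ due to Tuba and Wenzl \cite{TuW}, and then to exhibit each representation on their list as a member of the $q$-Pascal family of Theorem 2.1. Recall that a representation of $B_3$ is nothing but a pair of invertible matrices $A=\rho(\sigma_1)$ and $B=\rho(\sigma_2)$ subject to the single braid relation $ABA=BAB$. Since $\sigma_1$ and $\sigma_2$ are conjugate in $B_3$ (for instance $\sigma_2=(\sigma_1\sigma_2)\sigma_1(\sigma_1\sigma_2)^{-1}$), the matrices $A$ and $B$ have the same characteristic polynomial; I would denote its roots by $\lambda_0,\dots,\lambda_n$ and take these eigenvalues as the basic invariants attached to $\rho$. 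The reducible case I would dispose of by passing to a composition series and inducting on the dimension, so that the real content is the irreducible case in dimensions $d=n+1\le 5$.

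First I would reprove, by induction on $d\le 5$, the Tuba--Wenzl normal form. In the irreducible case one shows that $A$ is regular (its eigenvalues are pairwise distinct), so after conjugation $A=\operatorname{diag}(\lambda_0,\dots,\lambda_n)$; substituting this into $ABA=BAB$ forces $B$ to be, up to conjugation by a diagonal matrix, lower triangular with off-diagonal entries that are explicit rational functions of the $\lambda_i$, and it imposes polynomial constraints relating the eigenvalues. This is exactly the Tuba--Wenzl list for $d\le 5$: up to equivalence an irreducible representation is determined by the ordered tuple $(\lambda_0,\dots,\lambda_n)$ satisfying those constraints, together with one overall scalar.

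The heart of the argument is to match this normal form with the matrices $\sigma_1^{\Lambda_n}(q,n)=A_n(q)D_n^\sharp(q)\Lambda_n$ and $\sigma_2^{\Lambda_n}(q,n)=\Lambda_n^\sharp D_n(q)\bigl(\bigl(A_n(q^{-1})\bigr)^{-1}\bigr)^\sharp$ of Theorem 2.1. Since $A_n(q)$ is unipotent upper triangular (its diagonal entries are $\binom{n-k}{n-k}_q=1$), the eigenvalues of $\sigma_1^{\Lambda_n}(q,n)$ are exactly the diagonal entries of $D_n^\sharp(q)\Lambda_n$, namely $q_{n-r}\lambda_r$ for $0\le r\le n$ — the very quantities that already appear in the definition of $F_{r,n}(q,\lambda)$. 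Thus, given any admissible eigenvalue tuple from the classification, I would solve for a single $q$ out of the ratios of consecutive eigenvalues (these ratios carry the powers $q^{n-r-1}$ coming from $q_{n-r}/q_{n-r-1}$) and then read off the $\lambda_r$. The Tuba--Wenzl constraint that the eigenvalues pair up as $\lambda_i\lambda_{n-i}=\text{const}$ translates precisely into the symmetry relation $\lambda_i\lambda_{n-i}=c$ demanded in Theorem 2.1, which is in turn what guarantees $ABA=BAB$ for the $q$-Pascal pair. Conjugating by a suitable diagonal matrix lines up the off-diagonal data, and the $\sharp$/inverse operations built into $\sigma_2^{\Lambda_n}(q,n)$ reproduce the lower-triangular partner $B$. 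A parameter count — the family carries $[\tfrac{n+1}{2}]+1$ free parameters, matching the dimension of the space of admissible tuples modulo scaling — confirms that the assignment from parameters to equivalence classes is onto, so no representation is missed.

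The main obstacle is this explicit identification of the off-diagonal entries: one must verify that the $q$-binomial coefficients $\binom{n-k}{n-m}_q$ produced by $A_n(q)$ coincide, after the diagonal conjugation, with the entries appearing in the Tuba--Wenzl form, and that the equation determining $q$ from the eigenvalue ratios is always solvable, including the degenerate coincidences of eigenvalues that occur at special values such as roots of unity. This is essentially a combinatorial recognition problem — seeing the $q$-Pascal triangle hidden inside the Tuba--Wenzl matrices — after which the verification of the braid relation is the routine computation already packaged into Theorem 2.1.
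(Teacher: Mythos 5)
This statement is quoted from \cite{Al} and the paper supplies no proof of it, so there is nothing internal to compare against; judged on its own terms, your outline (classify irreducibles via \cite{TuW}, then realize each normal form inside the $q$-Pascal family of Theorem 2.1) is indeed the route taken in \cite{Al}, but the execution has genuine gaps.

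The most serious one is your claim that for an irreducible representation $A=\rho(\sigma_1)$ is regular with pairwise distinct eigenvalues, so that it can be diagonalized and $B$ then solved for from $ABA=BAB$. This is false: already in dimension $2$ the pair $A=\left(\begin{smallmatrix}1&1\\0&1\end{smallmatrix}\right)$, $B=\left(\begin{smallmatrix}1&0\\-1&1\end{smallmatrix}\right)$ satisfies the braid relation, is irreducible (the matrices share no eigenvector), and has a single repeated eigenvalue; the paper's own dimension-$6$ specialization likewise has $q^2$ with multiplicity $2$. The Tuba--Wenzl normal form is not reached by diagonalizing $A$; their key lemma produces one basis in which $A$ is upper triangular and $B$ simultaneously lower triangular, and it is precisely this lemma that is special to dimension $\le 5$ and fails from dimension $6$ onward --- which is why the irreducibility problem treated in this paper is open there. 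So your inductive ``reproof'' of the classification does not get off the ground as written. Two further gaps: (i) the reducible case cannot be handled by passing to a composition series, since a non-semisimple representation is not determined by its composition factors; the statement has to be read, as in \cite{Al} and \cite{TuW}, as a statement about irreducible representations. (ii) The matching step is asserted rather than proved: the eigenvalues of $\sigma_1^{\Lambda_n}(q,n)$ are $q_{n-r}\lambda_r$ subject to $\lambda_i\lambda_{n-i}=c$, so realizing a given admissible tuple $(\mu_0,\dots,\mu_n)$ forces $\mu_k\mu_{n-k}=q_kq_{n-k}\,c$ for all $k$, a system that determines $q$ and $c$ and imposes compatibility conditions on the $\mu_k$; one must verify that these coincide exactly with the Tuba--Wenzl constraints (including the discrete square-root choices occurring in dimensions $4$ and $5$). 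A parameter count shows at best that dimensions agree, not that the assignment is onto. Those verifications are the actual content of the theorem in \cite{Al}.
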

Note that the irreducibility of the representation defined in Theorem 2.1 that are of dimension $\leqslant5$ is discussed in \cite{Tu} and \cite{TuW}.
Also, Theorem 2.1 gives a family of the representations of $B_3$ that are of dimension $\geqslant6$ ($n\geqslant5$).
The subject of the irreducibility of these representations is still under study and research.
In this work, we study the irreducibility of some of these representations of dimension 6.\\

Suppose, in what follows, that $n=5$. Then, the matrix $A_5(q)$ is given by  
$$A_5(q)=\begin{pmatrix}
1&(5)_q&(1+q^2)(5)_q&(1+q^2)(5)_q&(5)_q&1\\
0&1&(1+q)(1+q^2)&(1+q^2)(3)_q&(1+q)(1+q^2)&1\\
0&0&1&(3)_q&(3)_q&1\\
0&0&0&1&1+q&1\\
0&0&0&0&1&1\\
0&0&0&0&0&1\\
\end{pmatrix}$$
and the representation of $B_3$ defined in Theorem 2.1 is of dimension 6. Moreover, the matrices representing the generators $\sigma_1$ and $\sigma_2$ of $B_3$ are given by

$$\sigma_1\mapsto \begin{pmatrix}
\lambda_0q^{10}&\lambda_1q^6(5)_q&\lambda_2q^3(1+q^2)(5)_q&\lambda_3q(1+q^2)(5)_q&\lambda_4(5)_q&\lambda_5\\
0&\lambda_1q^6&\lambda_2q^3(1+q)(1+q^2)&\lambda_3q(1+q^2)(3)_q&\lambda_4(1+q)(1+q^2)&\lambda_5\\
0&0&\lambda_2q^3&\lambda_3q(3)_q&\lambda_4(3)_q&\lambda_5\\
0&0&0&\lambda_3q&\lambda_4(1+q)&\lambda_5\\
0&0&0&0&\lambda_4&\lambda_5\\
0&0&0&0&0&\lambda_5\\
\end{pmatrix}$$
and 
$$\sigma_2\mapsto \begin{pmatrix}
\lambda_5&0&0&0&0&0\\

-\lambda_4&\lambda_4&0&0&0&0\\

\lambda_3&-\lambda_3(1+q)&\lambda_3q&0&0&0\\

-\lambda_2&\lambda_2(3)_q&-\lambda_2q(3)_q&\lambda_2q^3&0&0\\

\lambda_1&-\lambda_1(4)_q&\lambda_1q(1+q^2)(3)_q&-\lambda_1q^3(4)_q&\lambda_1q^6&0\\

-\lambda_0&\lambda_0(5)_q&-\lambda_0q(1+q^2)(5)_q&\lambda_0q^3(1+q^2)(5)_q&-\lambda_0q^6(5)_q&\lambda_0q^{10}\\
\end{pmatrix}.$$
\bigskip
\section{Irreducibility of Representations of $B_3$ of dimension 6}
\bigskip
In this section, let $q$ be a primitive third root of unity ($q^3=1$ and $q\neq1$).
By taking $c=1$, $\lambda_0=1$ and $\lambda_2=q^2$, we get $\lambda_3=\frac{1}{q^2}=q$, $\lambda_4=\lambda_1^{-1}$ and $\lambda_5=\frac{c}{\lambda_0}=1$. Under these conditions and for $n=5$, we substitute these values in the matrices above to get the following definition. 
\begin{defn}
Let $\rho:B_3\to GL(6,\mathbb{C})$ be the family of representations of $B_3$ of dimension 6 that is defined by
$$\sigma_1\mapsto
\begin{pmatrix}
q&-q^2\lambda_1&q^2&q^2&-q^2\lambda_1^{-1}&1\\
0&\lambda_1 &q^2&0&\lambda_1^{-1}&1\\
0&0&q^2&0&0&1\\
0&0&0&q^2&-q^2\lambda_1^{-1}&1\\
0&0&0&0&\lambda_1^{-1}&1\\
0&0&0&0&0&1\\
\end{pmatrix}$$
and 
$$\sigma_2\mapsto
\begin{pmatrix}
1&0 &0&0&0&0\\
-\lambda_1^{-1}&\lambda_1^{-1} &0&0&0&0\\
q&1&q^2&0&0&0\\
-q^2&0&0&q^2&0&0\\
\lambda_1&-\lambda_1&0&-\lambda_1&\lambda_1&0\\
-1&-q^2&-q&1&q^2&q\\
\end{pmatrix}.$$
\end{defn}
Note that $\rho(\sigma_1)$ and $\rho(\sigma_2)$ have the same eigenvalues, which are $q,\lambda_1,q^2$ (of multiplicity 2), $\lambda_1^{-1}$ and $1$.
The corresponding eigenvectors of  $\rho(\sigma_1)$ are
$$u_1=\begin{pmatrix}
1\\
0\\
0\\
0\\
0\\
0\\
\end{pmatrix},\; 
u_2=\begin{pmatrix}
-\lambda_1(1+q)\\
-\lambda_1+q\\
0\\
0\\
0\\
0\\
\end{pmatrix},\;
u_3=\begin{pmatrix}
\lambda_1+q\\
q-1\\
(\lambda_1+q^2)(-q+q^2)\\
0\\
0\\
0\\
\end{pmatrix},\;
u_4=\begin{pmatrix}
-1\\
0\\
0\\
q^2-1\\
0\\
0\\
\end{pmatrix},$$

$$u_5=\begin{pmatrix}
q-\lambda_1^3\\
(1-\lambda_1q)(\lambda_1q-q^2)\\
0\\
-q(1-\lambda_1^2)(-1+\lambda_1q)\\
(-1+\lambda_1^2)(-1+\lambda_1q)(\lambda_1q-q^2)\\
0\\
\end{pmatrix} \text{ and }
u_6=\begin{pmatrix}
\lambda_1q^2-2\lambda_1+3+3\lambda_1^2+\lambda_1q\\
3q(\lambda_1q-1)\\
-3q^2(1-\lambda_1)^2\\
-3(-1+\lambda_1)(1+\lambda_1q^2)\\
3q(1-q)\lambda_1(-1+\lambda_1)\\
3q(1-q)(-1+\lambda_1)^2\\
\end{pmatrix}.$$
Assume that $\lambda_1\not\in\{-1,1,q,q^2\}$. Then, the vectors $u_i (i=1,2,\cdots,6)$ are linearly independent and the transition matrix $P=(u_1\;u_2\;u_3\;u_4\;u_5\;u_6)$ is invertible. Conjugating the representation by $P$, we get an equivalent representation.
$$\rho(\sigma_1)\mapsto X=\begin{pmatrix}
q&0&0&0&0&0\\
0&\lambda_1&0&0&0&0\\
0&0&q^2&0&0&0\\
0&0&0&q^2&0&0\\
0&0&0&0&\lambda_1^{-1}&0\\
0&0&0&0&0&1\\
\end{pmatrix}$$

and $\rho(\sigma_2)\mapsto Y= P^{-1}\rho(\sigma_2)P=\left(K_1\;K_2\;K_3\;K_4\;K_5\;K_6\right)$, where

$$K_1=\begin{pmatrix}
\frac{\lambda_1q}{(q^2-1)(-\lambda_1+q)(-1+\lambda_1q)}\\
\frac{\lambda_1^3-q^2}{\lambda_1(\lambda_1-1)^2(\lambda_1+1)(\lambda_1-q)(\lambda_1-q^2)}\\
-\frac{1}{3(-\lambda_1+q^2)}\\
-\frac{q^2(\lambda_1+q^2)}{3(-\lambda_1+q)}\\
\frac{\lambda_1^2}{(\lambda_1-1)^2(\lambda_1+1)(\lambda_1-q)q(\lambda_1q-1)}\\
\frac{1}{3(\lambda_1-1)^2(q^2-q)}
\end{pmatrix},\;\;
K_2=\begin{pmatrix}
\frac{q[2+q-(1+2q)\lambda_1^2]}{3(\lambda_1-q)(-1+\lambda_1q)}\\
\frac{1-\lambda_1^3q^2}{\lambda_1(\lambda_1-1)^2(\lambda_1+1)(\lambda_1-q)(\lambda_1-q^2)}\\
-\frac{1}{3(-\lambda_1+q^2)}\\
\frac{1+\lambda_1}{3(\lambda_1-q)}\\
-\frac{\lambda_1(\lambda_1^2+q)}{(\lambda_1-1)^2(\lambda_1+1)(\lambda_1-q)(\lambda_1q-1)}\\
\frac{1}{3(\lambda_1-1)^2(q^2-q)}
\end{pmatrix},
$$

$$K_3=\begin{pmatrix}
\frac{(1+\lambda_1)(-3q^2+(1-q^2)\lambda_1+3\lambda_1^2)}{-3(\lambda_1-q)(-1+\lambda_1q)}\\
\frac{q^2(-1-\lambda_1+(q-q^2)\lambda_1^2+\lambda_1^3+\lambda_1^4)}{\lambda_1(\lambda_1-1)^2(\lambda_1+1)(\lambda_1-q)(\lambda_1-q^2)}\\
-\frac{\lambda_1+q}{3q(-\lambda_1+q^2)}\\
-\frac{q(2+\lambda_1+2\lambda_1^2)}{3(\lambda_1-q)}\\
\frac{q\lambda_1^2(1+q\lambda_1))}{(\lambda_1-1)^2(\lambda_1+1)(\lambda_1-q)(\lambda_1q-1)}\\
\frac{q(q+\lambda_1)}{3(\lambda_1-1)^2(q-1)}
\end{pmatrix},\;\;
K_4=\begin{pmatrix}
\frac{-3+2(q-1)\lambda_1+3q\lambda_1^2}{-3(\lambda_1-q)(-1+\lambda_1q)}\\
\frac{q^2+(q-q^2)\lambda_1+(q-q^2)\lambda_1^2-q\lambda_1^3}{\lambda_1(\lambda_1-1)^2(\lambda_1+1)(\lambda_1-q)(\lambda_1-q^2)}\\
\frac{-2}{3q(-\lambda_1+q^2)}\\
\frac{q(\lambda_1+q^2)}{3(-\lambda_1+q)}\\
\frac{-q\lambda_1^2}{(\lambda_1-1)^2(\lambda_1+1)(\lambda_1-q)(\lambda_1q-1)}\\
-\frac{q}{3(\lambda_1-1)^2(q-1)}
\end{pmatrix},
$$

$$K_5=\begin{pmatrix}
\frac{\lambda_1(-2q-1+(2+q)\lambda_1^2+(2+q^2)\lambda_1^3+(2q+1)\lambda_1^5)}{3(\lambda_1-q)(-1+\lambda_1q)}\\
\frac{q^2+\lambda_1^2+q^2\lambda_1^3+\lambda_1^5+q^2\lambda_1^6+\lambda_1^8}{\lambda_1(\lambda_1-1)^2(\lambda_1+1)(\lambda_1-q)(\lambda_1-q^2)}\\
\frac{\lambda_1(1+q+(2+q)\lambda_1-(1+2q)\lambda_1^2+q^2\lambda_1^3)}{3q^2(-\lambda_1+q^2)}\\
-\frac{q^2\lambda_1(1+q^2-\lambda_1-(2+q)\lambda_1^2+q^2\lambda_1^3+q\lambda_1^4)}{3(\lambda_1-q)}\\
\frac{\lambda_1^3(-1+q\lambda_1^3)}{(\lambda_1-1)^2(\lambda_1+1)(\lambda_1-q)(\lambda_1q-1)}\\
-\frac{\lambda_1(-q+\lambda_1^3))}{3(\lambda_1-1)^2(q-1)}
\end{pmatrix},\;\;
K_6=\begin{pmatrix}
\frac{(q^2-1)(1-\lambda_1^2+\lambda_1^4)}{(\lambda_1-q)(-1+\lambda_1q)}\\
\frac{3q+3q^2\lambda_1^2-3q^2\lambda_1^3-3\lambda_1^5}{\lambda_1(\lambda_1-1)^2(\lambda_1+1)(\lambda_1-q)(\lambda_1-q^2)}\\
\frac{2+q+2(q^2-q)\lambda_1+(-1+q)\lambda_1^2}{(1-q)(-\lambda_1+q^2)}\\
\frac{q-q^2-3\lambda_1+3\lambda_1^2+(1+2q)\lambda_1^3}{(1-q)(\lambda_1-q)}\\
\frac{-3q^2\lambda_1^2-3q\lambda_1^4}{(\lambda_1-1)^2(\lambda_1+1)(\lambda_1-q)(\lambda_1q-1)}\\
\frac{-2-q+(1-q^2)\lambda_1+(-1+q^2)\lambda_1^2}{3(\lambda_1-1)^2}
\end{pmatrix}.
$$\\

%%%%%PROPOSITION 1  
\begin{prop}
The representation $\rho$ has no invariant subspaces of dimension 1.
\end{prop}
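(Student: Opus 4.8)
The plan is to work in the conjugated model, where $\rho(\sigma_1)$ is the diagonal matrix $X=\diag{q,\lambda_1,q^2,q^2,\lambda_1^{-1},1}$ and $\rho(\sigma_2)$ is $Y=(K_1\ \cdots\ K_6)$. A one-dimensional invariant subspace is exactly a line spanned by a common eigenvector of $X$ and $Y$, so it suffices to show that no eigenvector of $X$ is an eigenvector of $Y$. Since the standing hypothesis $\lambda_1\notin\{-1,1,q,q^2\}$ makes $q^2$ the only repeated eigenvalue of $X$, the eigenlines of $X$ are spanned by the coordinate vectors $e_1,e_2,e_5,e_6$ (for the simple eigenvalues $q,\lambda_1,\lambda_1^{-1},1$), together with every nonzero vector of the plane $\langle e_3,e_4\rangle$ (for $q^2$). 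I would then rule out each of these possibilities in turn.

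For the four simple eigenvalues I argue directly: $e_j$ is an eigenvector of $Y$ precisely when $Ye_j=K_j$ is a scalar multiple of $e_j$, i.e. when every entry of $K_j$ lying off row $j$ vanishes. For $j=1$ and $j=2$ this is immediate, since the third entries of $K_1$ and $K_2$ both equal $-\tfrac{1}{3(q^2-\lambda_1)}$, a nonzero constant over a factor that does not vanish for admissible $\lambda_1$; hence $e_1,e_2$ are not eigenvectors of $Y$. For $j=5$ and $j=6$ I would likewise exhibit a nonvanishing off-row entry of $K_5$, $K_6$; here the candidate entries are nonzero polynomials in $\lambda_1$, so I must only check that their finitely many roots either lie in the excluded set $\{-1,1,q,q^2\}$ or are killed by a second off-row entry, which is a routine finite verification. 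This eliminates every eigenline of $X$ attached to a simple eigenvalue.

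The substantive case is the double eigenvalue $q^2$. A common eigenvector here has the form $v=ae_3+be_4$ with $(a,b)\neq(0,0)$, and $Yv\in\langle v\rangle$ forces the coordinates of $aK_3+bK_4$ in rows $1,2,5,6$ to vanish, since a vector of $\langle e_3,e_4\rangle$ can be nonzero only in rows $3,4$. This is a homogeneous linear system in $(a,b)$ whose coefficient matrix $N$ is the $4\times2$ array obtained by restricting $K_3$ and $K_4$ to rows $1,2,5,6$, and the plane $\langle e_3,e_4\rangle$ contains no $Y$-eigenvector precisely when $N$ has rank $2$. The plan is therefore to clear denominators and compute the $2\times2$ minors of $N$, showing they have no common zero for admissible $\lambda_1$: it suffices that one minor, say $(K_3)_1(K_4)_2-(K_3)_2(K_4)_1$, is a nonzero polynomial all of whose roots lie in $\{-1,1,q,q^2\}$, or, failing that, that two minors share no root outside that set. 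I expect this rank computation to be the main obstacle, because the entries of $K_3,K_4$ are unwieldy; the practical tactic is to pick the pair of rows whose numerators are simplest after cancellation, so the resulting minor is visibly a nonzero polynomial with controllable roots. Once $N$ has rank $2$ the only solution is $(a,b)=(0,0)$, no common eigenvector survives in the $q^2$-eigenspace, and combining with the simple-eigenvalue cases we conclude that $\rho$ has no one-dimensional invariant subspace.
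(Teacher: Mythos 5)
Your proposal follows essentially the same route as the paper: pass to the conjugated model where $\rho(\sigma_1)$ is the diagonal matrix $X$, observe that a one-dimensional invariant subspace must be an eigenline of $X$ (a coordinate axis or a line in the $q^2$-plane $\langle e_3,e_4\rangle$), and rule each out by showing that the relevant off-row entries of the columns $K_j$ (respectively of $aK_3+bK_4$) cannot vanish simultaneously for admissible $\lambda_1$. Your packaging of the $q^2$-eigenspace case as a rank-two condition on the $4\times 2$ matrix formed by rows $1,2,5,6$ of $(K_3\ K_4)$ is equivalent to the paper's explicit case split into $\langle e_3\rangle$, $\langle e_4\rangle$ and $\langle\alpha e_3+e_4\rangle$ with $\alpha\neq 0$, and the computations you defer are exactly the finite polynomial checks the paper carries out.
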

\begin{proof}
The possible subspaces of dimension 1 that are invariant under $X$ are $\langle e_i\rangle$ for $i=1,2,3,4,5,6$, and $\langle\alpha e_3+e_4\rangle$ for $\alpha\in\mathbb{C}$. Here $e_i$ are the standard unit vectors in $\mathbb{C}^6$, and are considered as column vectors.\\
It is clear to see that
$Y(e_i)\not\in\langle e_i\rangle$ for $i=1,2,3,4$.
Assume that $Y(e_5)=K_5\in\langle e_5\rangle$. Then all components, except the fifth, of $K_5$ are zeros.
In particular the third and sixth components of $K_5$ are zeros. So,  $\lambda_1^3=q$ and $1+q+(2+q)\lambda_1-(1+2q)\lambda_1^2+q^2\lambda_1^3=0$. Then, by direct computation, we get $\lambda_1=q$ or $q^2$ and $\lambda_1^3=q$ which is impossible as $q\neq1$. So, $Y(e_5)\not\in\langle e_5\rangle$.
Therefore, $\langle e_5\rangle$ is not invariant under $Y$.\\
Suppose that $Y(e_6)\in\langle e_6\rangle$. Then the $5^{\text{th}}$ and $2^{\text{nd}}$ components of $K_6=Y(e_6)$ are zeros.
Then $\lambda_1^2=-q$ and $q+q^2\lambda_1^2-q^2\lambda_1^3-\lambda_1^5=0$.
By direct computation, we get $\lambda_1=-q$.
 Therefore, $q=-1$, contradiction. So, $Y(e_6)\not\in\langle e_6\rangle$ and $\langle e_6\rangle$ is not invariant under $Y$.\\
It remains to show that any subspace of the form $\langle\alpha e_3+e_4\rangle$, where $\alpha $ is a non-zero complex number, is not invariant under $Y$. 
Note that $Y(\alpha e_3+e_4)=\alpha K_3+K_4$. If $\alpha K_3+K_4\in\langle\alpha e_3+e_4\rangle$ then
the fifth and sixth components of $\alpha K_3+K_4$ are zeros. So, $\alpha q\lambda_1^2(1+q\lambda_1)+q\lambda_1^2=0$ and $\alpha q(q+1)-q=0$. This implies that $\alpha =-q$ and $\lambda_1=q-q^2$. Substitute the obtained values of $\alpha $ and $\lambda_1$ in the numerator of the $2^{\text{nd}}$ component of $ \alpha K_3+K_4$ to get
$$
%qq^2(-1-(q-q^2)+(q-q^2)^3+(q-q^2)^3+(q-q^2)^4)+q^2+
%(q-q^2)^2+(q-q^2)^3-q(q-q^2)^3 
-(q-q^2)^3-(q-q^2)^4+q^2+(q-q^2)^2-q(q-q^2)^3=2(q-8)\neq 0
\text{ when }q^3=1, q\neq1.$$
%In fact it is $$=\begin{array}{c}-17+ i\sqrt{3}\;\;\;(q=e^{2\pi i/3})\\
%-17- i\sqrt{3}\;\;\;(q=e^{4\pi i/3})\end{array}$$
Therefore, the second component of $ \alpha K_3+K_4$  is not zero, contradiction.
\end{proof}
%%PROPOSITION 2
\begin{prop}
The representation $\rho$ has no invariant subspaces of dimension 2 if and only if $\lambda_1^3\neq q$.
\end{prop}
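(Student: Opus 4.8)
The plan is to exploit that $X$ is diagonal. Since $\lambda_1\notin\{-1,1,q,q^2\}$, the eigenvalues $q,\lambda_1,\lambda_1^{-1},1$ are simple and distinct from $q^2$, whose eigenspace is $\langle e_3,e_4\rangle$. Hence every $X$-invariant $2$-dimensional subspace $W$ splits along the eigenspace decomposition and must be of exactly one of three types: (i) $W=\langle e_3,e_4\rangle$; (ii) $W=\langle e_i,e_j\rangle$ with $i<j$ and $i,j\in\{1,2,5,6\}$; or (iii) $W=\langle e_i,\alpha e_3+\beta e_4\rangle$ with $i\in\{1,2,5,6\}$ and $(\alpha,\beta)\neq(0,0)$. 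For each type I would impose $YW\subseteq W$ and read off the resulting conditions on $\lambda_1$ directly from the explicit columns $K_1,\dots,K_6$; write $K_i^{(j)}$ for the $j$-th entry of $K_i$.

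The computation is organized around the sixth coordinate. A direct inspection shows that $K_1^{(6)}=K_2^{(6)}=\frac{1}{3(\lambda_1-1)^2(q^2-q)}$ and $K_4^{(6)}=-\frac{q}{3(\lambda_1-1)^2(q-1)}$ are all nonzero, whereas $K_5^{(6)}=-\frac{\lambda_1(\lambda_1^3-q)}{3(\lambda_1-1)^2(q-1)}$ vanishes precisely when $\lambda_1^3=q$. For any $W$ in whose vectors the sixth coordinate is identically zero (type (i), type (ii) with $i,j\neq6$, and type (iii) with $i\in\{1,2\}$), the membership $Ye_k\in W$ forces $K_k^{(6)}=0$ for the relevant indices $k$; since $K_1^{(6)},K_2^{(6)},K_4^{(6)}$ never vanish, all of these subspaces are immediately excluded. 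The subspaces containing $e_6$ (type (ii) with $j=6$ and type (iii) with $i=6$) are ruled out by a short computation: for $\langle e_1,e_6\rangle$ and $\langle e_2,e_6\rangle$ one has $K_1^{(3)},K_2^{(3)}\neq0$; for $\langle e_6,\alpha e_3+\beta e_4\rangle$ the membership $K_6\in W$ forces $K_6^{(1)}=K_6^{(2)}=K_6^{(5)}=0$, and one checks that $K_6^{(5)}=0$ gives $\lambda_1^2=-q$, which already forces $K_6^{(1)}=0$ but is incompatible with $K_6^{(2)}=0$; the case $\langle e_5,e_6\rangle$ requires $K_5^{(1)}=K_5^{(2)}=K_5^{(3)}=K_5^{(4)}=0$, which has no solution with $\lambda_1\notin\{-1,1,q,q^2\}$.

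This leaves only type (iii) with $i=5$, that is $W=\langle e_5,\alpha e_3+\beta e_4\rangle$. Here $Ye_5=K_5\in W$ forces $K_5^{(1)}=K_5^{(2)}=K_5^{(6)}=0$, and $K_5^{(6)}=0$ already yields the necessary condition $\lambda_1^3=q$. Conversely, reducing the entries modulo $q^2+q+1$ and $\lambda_1^3-q$, one verifies that at $\lambda_1^3=q$ the entries $K_5^{(1)},K_5^{(2)},K_5^{(6)}$ all vanish, so that $K_5=K_5^{(3)}e_3+K_5^{(4)}e_4+K_5^{(5)}e_5$. Setting $v=K_5^{(3)}e_3+K_5^{(4)}e_4$ gives $K_5\in\langle e_5,v\rangle$, and the subspace $W=\langle e_5,v\rangle$ is manifestly $X$-invariant.

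It therefore remains to check that $Yv=K_5^{(3)}K_3+K_5^{(4)}K_4\in\langle e_5,v\rangle$, i.e. that its first, second and sixth coordinates vanish and that its $(e_3,e_4)$-part is proportional to $(K_5^{(3)},K_5^{(4)})$. I expect this last verification, carried out with $q^3=1$ and $\lambda_1^3=q$, to be the main obstacle: it is the only step requiring the full, unwieldy rational expressions for $K_3,K_4,K_5$ rather than a single coordinate, and the proportionality of the $(e_3,e_4)$-components is the delicate point. Once it is confirmed, $W$ is a $2$-dimensional invariant subspace whenever $\lambda_1^3=q$, while the case analysis above shows that none exists when $\lambda_1^3\neq q$, completing both directions of the equivalence.
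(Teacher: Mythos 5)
Your overall strategy is the same as the paper's: list the $X$-invariant $2$-planes (which you justify more cleanly via the eigenspace decomposition, noting that $q^2$ is the only repeated eigenvalue with eigenspace $\langle e_3,e_4\rangle$), then test whether the relevant columns $K_i$ of $Y$ can lie in each candidate. Your organization around the sixth coordinate is tidier than the paper's case-by-case sweep, and every nonvanishing claim you make checks out against the explicit columns: $K_1^{(6)},K_2^{(6)},K_4^{(6)}$ are indeed never zero, $K_5^{(6)}$ vanishes exactly when $\lambda_1^3=q$, the $\langle e_6,\alpha e_3+\beta e_4\rangle$ case does reduce to $\lambda_1^2=-q$ versus $K_6^{(2)}=0$ forcing $\lambda_1=-q$, which is incompatible, and $K_5^{(1)},K_5^{(2)}$ do vanish when $\lambda_1^3=q$ (both numerators reduce to multiples of $1+q+q^2$). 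The forward direction ("$\lambda_1^3\neq q$ implies no invariant $2$-plane") is therefore complete, modulo the same computer-checked facts the paper itself invokes (e.g.\ that $K_5^{(3)}=K_5^{(4)}=0$ has no admissible solution).

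The genuine gap is the one you flag yourself: the converse requires exhibiting an invariant $2$-plane when $\lambda_1^3=q$, and you stop at "it remains to check that $Yv=K_5^{(3)}K_3+K_5^{(4)}K_4\in\langle e_5,v\rangle$," which you describe as the main obstacle rather than carry out. Without that verification you have only shown that $\lambda_1^3=q$ is \emph{necessary} for a $2$-plane of the form $\langle e_5,\alpha e_3+\beta e_4\rangle$ to be invariant, not that one actually exists, so the "only if" half of the proposition is unproved. This is precisely the step the paper settles by direct computation in Mathematica: for $\lambda_1^3=q$ both $K_5$ and $Y(\alpha e_3+e_4)$ land in $S_5^{\alpha}$ for the specific value $\alpha=(-1-\lambda_1)(1+q\lambda_1)$ (equivalently, your ratio $K_5^{(3)}/K_5^{(4)}$ reduced modulo $q^2+q+1$ and $\lambda_1^3-q$), so the proportionality of the $(e_3,e_4)$-parts that worries you does hold. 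Your setup is correct and would close once that single rational-function identity is verified, but as written the proof of the equivalence is incomplete.
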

\begin{proof}
The possible subspaces of dimension 2 that are invariant under $X$ are: $S_{ij}=\langle e_i,e_j\rangle$, and $S^{\alpha}_k=\langle e_k,\alpha e_3+e_4\rangle$ for $\alpha\in\mathbb{C}$, $1\leqslant i<j\leqslant6$ and $k=1,2,5,6$.\\
We can easily see that $Y(e_1)=K_1\not\in S_{1i}$ for all $i=2,3,4,5,6$. So, the subspaces $S_{1i}$ $(i=2,3,4,5,6)$ are not invariant under $Y$.\\
Also $Y(e_2)=K_2\not\in S_{2i}$ for $i=3,4,5,6$ since the third and sixth components of $K_2$ are not zeros. Thus, the subspaces $S_{2i}$ $(i=3,4,5,6)$ are not invariant under $Y$.\\
The fourth, fifth and sixth components of $Y(e_3)=K_3$ cannot be zeros at the same time for all values of $\lambda_1$. So, $Y(e_3)=K_3\not\in S_{3i}$ for $i=4,5,6$. So, $S_{3i}$ is not invariant under $Y$ for $i=4,5,6$.\\
$Y(e_4)\not\in S_{4i}$ for $i=5,6$ because the third component of $Y(e_4)=K_4$ is not zero. So, $S_{4i}$ is not invariant under $Y$ for $i=5,6$.\\
If the third  and fourth components of $Y(e_5)=K_5$ are zeros and since $\lambda_1\neq0$ we then have
$$\left\{\begin{array}{c}
1+q+(2+q)\lambda_1-(1+2q)\lambda_1^2+q^2\lambda_1^3=0\\
1+q^2-\lambda_1-(2+q)\lambda_1^2 +q^2\lambda_1^3 +q \lambda_1^4 =0\end{array}\right.$$

By using Mathematica, we show that there is no complex solution in terms of $q$ satisfying this system of equations.
%%% The solutions of the 1st eq are: $-1.923-/+0.383i, -0.5-/+0.0996i$ and $0.924-/+0.383i$
%%% and the solutions of the 2nd eq are:$-1,-0.13+/-0.992i$, $0.336+/-0.383i$ and $-1.294-/+1.474i$
So $Y(e_3)\not\in S_{56}$. Therefore, $ S_{56}$ is not invariant under $Y$.\\
It remains to discuss the subspaces  $S^\alpha_{k}$ for $k=1,2,5,6$ and $\alpha\in\mathbb{C}$. Since the sixth components of $Y(e_1)$ and $Y(e_2)$ are not zeros, it follows that $Y(e_1)\not\in\langle\alpha e_3+e_4,e_1\rangle$ and  $Y(e_2)\not\in\langle\alpha e_3+e_4,e_2\rangle$ for all $\alpha\in\mathbb{C}$. Therefore, $S^\alpha_{1}$ and $S^\alpha_{2}$ are not invariant under $Y$.
Now, if $Y(e_6)=K_6\in\langle\alpha e_3+e_4,e_6\rangle$ then the second and fifth components of $K_6$ are zeros.
This yields to the following equations:
$$
\left\{\begin{array}{c} q+q^2\lambda_1^2-q^2\lambda_1^3-\lambda_1^5=0\\
-3q\lambda_1^2(q+\lambda_1^2)=0
\end{array}
\right..$$
Using that fact that $q$ is a primitive third root of unity, we get  $\lambda_1=\lambda_1^2=-q$, that is $q=\lambda_1=0 \text{ or } 1$, a contradiction.
Hence, $Y(e_6)\not\in S^\alpha_{6}$ for all $\alpha\in\mathbb{C}$. Therefore, $S^\alpha_{6}$ is not invariant under $Y$.\\   
Finally, if a subspace $S^\alpha_{5}$ is invariant under $Y$ then, $Y(e_5)=K_5\in\langle S^\alpha_{5}\rangle$. Then, $\lambda_1^3=q$. Conversely, if $\lambda_1^3=q$ 
then, by direct computation and using Mathematica, we show that 
$Y(e_5)=K_5\in S^\alpha_{5}$  and
$Y(\alpha e_3+e_4)\in S^\alpha_{5}$ for $\alpha=(-1-\lambda_1)(1+q\lambda_1)$.
Therefore, $\langle\alpha e_3+e_4,e_5\rangle$ is invariant under $Y$ if and only if $\lambda_1^3=q$. The invariant subspaces corresponding to these values of $\lambda_1$ are of the form
$$\langle\alpha e_3+e_4,e_5\rangle \text{ where, }\alpha=(-1-\lambda_1)(1+q\lambda_1).$$
\end{proof}

%%%%% PROPOSITION 3
\begin{prop}
The representation $\rho$ has no invariant subspaces of dimension 3 if and only if $\lambda_1^2\neq-q$, $-q^2$. 
\end{prop}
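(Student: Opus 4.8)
The plan is to mirror the method of Propositions 3.1 and 3.2: first list every three–dimensional subspace invariant under the diagonal matrix $X$, and then test each candidate against $Y$. Since $\lambda_1\notin\{-1,1,q,q^2\}$, the eigenvalues $q,\lambda_1,\lambda_1^{-1},1$ of $X$ are pairwise distinct and distinct from $q^2$, which is the only repeated eigenvalue, with eigenspace $\langle e_3,e_4\rangle$. Hence every $X$–invariant subspace is a direct sum of pieces, one taken from each eigenspace, and the three–dimensional ones fall into three families: (i) $\langle e_3,e_4,e_k\rangle$ with $k\in\{1,2,5,6\}$; (ii) $\langle e_i,e_j,w\rangle$ with $\{i,j\}\subset\{1,2,5,6\}$ and $w$ a line in $\langle e_3,e_4\rangle$, i.e. $w=\alpha e_3+e_4$ ($\alpha\in\mathbb{C}$) or $w=e_3$; and (iii) $\langle e_i,e_j,e_k\rangle$ with $\{i,j,k\}\subset\{1,2,5,6\}$. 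Invariance under $Y$ means that for each spanning vector $v$ the column $Y(v)$ (one of the $K_i$, or a combination $\alpha K_3+K_4$) must have vanishing entries in every coordinate not spanned by the subspace.

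The first reduction discards every candidate containing $e_1$: the column $K_1$ has nonzero first, third, fifth and sixth entries for all admissible $\lambda_1$, so $e_1\in V$ would drag $e_5$, $e_6$ and an $e_3$–direction into $V$, exceeding dimension three. For the remaining candidates I would test $Y$–invariance coordinate by coordinate. A useful recurring fact is that $K_1,K_2,K_4$ each have a nonzero sixth entry, while the only eigendirection carrying a sixth coordinate is $e_6$; hence any candidate containing $e_2$ or $e_4$ as a pure basis vector but not $e_6$ is immediately discarded. Together with similar observations—$K_2$ has the nonzero third entry $-\tfrac1{3(q^2-\lambda_1)}$ and fourth entry $\tfrac{1+\lambda_1}{3(\lambda_1-q)}$, and $K_4$ the nonzero fifth entry $-q\lambda_1^2/[\cdots]$—this eliminates all of families (i) and (iii), the choice $w=e_3$ in (ii), and the pair $\{i,j\}=\{2,5\}$ in (ii). Exactly two families survive, both containing $e_6$: namely $\langle e_2,e_6,\alpha e_3+e_4\rangle$ and $\langle e_5,e_6,\alpha e_3+e_4\rangle$.

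Because both survivors contain $e_6$, the necessary condition is read off from $K_6$. Its first entry has numerator $(q^2-1)(1-\lambda_1^2+\lambda_1^4)$, and the identity $1-\lambda_1^2+\lambda_1^4=(\lambda_1^2+q)(\lambda_1^2+q^2)$ (using $q+q^2=-1$ and $q^3=1$) shows this vanishes exactly when $\lambda_1^2\in\{-q,-q^2\}$. I would then separate the two cases. For $\langle e_2,e_6,\alpha e_3+e_4\rangle$ the uncovered first and fifth entries of $K_2$ both have numerators proportional to $\lambda_1^2+q$, since $2+q-(1+2q)\lambda_1^2=-(1+2q)(\lambda_1^2+q)$ by $1+q+q^2=0$; this forces $\lambda_1^2=-q$, and $\lambda_1^2=-q^2$ is incompatible here because then $K_2$ has nonzero first entry. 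For $\langle e_5,e_6,\alpha e_3+e_4\rangle$ the uncovered first and second entries of $K_5$ and $K_6$ vanish precisely when $\lambda_1^2=-q^2$ (a direct substitution, using $\lambda_1^6=-1$ and $\lambda_1^8=q^2$), whereas $\lambda_1^2=-q$ leaves the second entry of $K_6$ nonzero. Thus an invariant three–dimensional subspace can occur only if $\lambda_1^2=-q$ or $\lambda_1^2=-q^2$.

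For the converse I would verify that each forced value actually produces an invariant subspace. When $\lambda_1^2=-q$ the uncovered first and fifth entries of $K_2$, $K_6$ and $\alpha K_3+K_4$ all vanish, and it remains to choose $\alpha$ so that the surviving third and fourth entries of each of these three columns stand in the ratio $\alpha:1$; one shows the three resulting equations for $\alpha$ share a common solution, giving an explicit $\alpha$ exactly as in Proposition 3.2. The case $\lambda_1^2=-q^2$ is handled symmetrically with $\langle e_5,e_6,\alpha e_3+e_4\rangle$. The main obstacle is precisely this final consistency check: the entries of the $K_i$ are bulky rational functions of $\lambda_1$, and confirming that the ratio conditions coming from three different columns are simultaneously solvable by a single $\alpha$ is where a symbolic computation (Mathematica) is needed, just as in the proofs of Propositions 3.1 and 3.2. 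A secondary point of care is ensuring the elimination step is genuinely exhaustive, so that no $X$–invariant subspace is overlooked.
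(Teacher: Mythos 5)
Your strategy is essentially the paper's: enumerate the $X$-invariant three-dimensional subspaces (the same three families), discard most candidates by exhibiting an always-nonzero entry in the relevant column $K_i$, and reduce to the two parametrized families $\langle e_2,\alpha e_3+e_4,e_6\rangle$ and $\langle e_5,\alpha e_3+e_4,e_6\rangle$, which yield the conditions $\lambda_1^2=-q$ and $\lambda_1^2=-q^2$ respectively, with the final consistency checks delegated to Mathematica exactly as in the paper. Two of your shortcuts are genuine improvements in transparency: the factorization $1-\lambda_1^2+\lambda_1^4=(\lambda_1^2+q)(\lambda_1^2+q^2)$ makes the necessary condition visible directly from the first entry of $K_6$, and killing $\langle e_2,e_3,e_6\rangle$ via the fourth entry $\frac{1+\lambda_1}{3(\lambda_1-q)}$ of $K_2$ (nonzero since $\lambda_1\neq-1$) is cleaner than the paper's two-step substitution through $Y(e_3)$.

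There is, however, one case your elimination does not actually cover: $\langle e_3,e_5,e_6\rangle$, i.e.\ family (ii) with $w=e_3$ and $\{i,j\}=\{5,6\}$. It contains neither $e_2$ nor $e_4$ as a pure basis vector and it does contain $e_6$, so none of the observations you list (the sixth entries of $K_1,K_2,K_4$; the third and fourth entries of $K_2$; the fifth entry of $K_4$) applies to it, yet you assert that the choice $w=e_3$ in (ii) is disposed of and that exactly two families survive. The paper eliminates this candidate by a separate computation: the vanishing of the fourth component of $K_3$ forces $\lambda_1=\frac{-1\pm i\sqrt{15}}{4}$, and for those values the first component of $K_3$ is nonzero. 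Alternatively, your own $K_6$ observation patches the hole for the direction you actually need: $\langle e_3,e_5,e_6\rangle$ contains $e_6$ but not $e_1$, so its invariance would force the first entry of $K_6$ to vanish and hence $\lambda_1^2\in\{-q,-q^2\}$, which is harmless for the biconditional since an invariant subspace is exhibited for those values anyway. Either way, this candidate must be addressed explicitly rather than folded into ``similar observations''; as written, that step of the elimination is unjustified.
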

\begin{proof}
The subspaces of dimension 3 that are invariant under $X$ are $\langle e_i,e_j,e_k\rangle$ and $\langle e_s,\alpha e_3+e_4,e_t\rangle$ for $\alpha\in\mathbb{C}$, $1\leqslant i<j<k\leqslant6$ and $s,t\in\{1,2,5,6\}$ with $s<t$.\\
Since the third, fifth and sixth components of $Y(e_1)$ are not zeros, it follows that all the subspaces of the form $\langle e_1,e_j,e_k\rangle$ together with the subspaces of the form $\langle e_1,\alpha e_3+e_4,e_t\rangle$ are not invariant under $Y$ for $1<j<k\leqslant 6$ and $t=2,5,6$.\\
The third and sixth components of $K_2=Y(e_2)$ are not zeros. So, $Y(e_2)\not\in\langle e_2,e_j,e_k\rangle$ and $Y(e_2)\not\in\langle e_2,\alpha e_3+e_4,e_5\rangle$ for all $2<j<k\leqslant 6$ such that $\{j,k\}\neq\{3,6\}$ and for all $\alpha\in\mathbb{C}$. Then the subspaces of the form $\langle e_2,e_j,e_k\rangle$ and $\langle e_2,\alpha e_3+e_4,e_5\rangle$ are not invariant under $Y$ for all $2<j<k\leqslant 6$ such that $\{j,k\}\neq\{3,6\}$.\\
Assume that the subspace $S=\langle e_2,e_3,e_6\rangle$ is invariant under $Y$ then, $Y(e_3)\in S$. So, $\lambda_1=-q$ (as the sixth component of $Y(e_3)$ is zero). Substitute the value of $\lambda_1$ in the first component of $Y(e_3)$ to get $\frac{(q-1)(-q^2+1)}{6q}\neq0$, contradiction. So, $S$ is not invariant under $Y$.\\
Consider the subspace $S^\alpha=\langle e_2,\alpha e_3+e_4,e_6\rangle$, where $\alpha\in\mathbb{C}$.
 Suppose $S^\alpha$ is invariant under $Y$ then, $Y(e_2)\in S^\alpha$. So, the fifth component of $K_2$  is zero. Thus, $\lambda_1^2=-q$.
Conversely, assume that $\lambda_1^2=-q$. Then, by direct computation and using Mathematica, we show that $Y(e_r)\in S^\alpha$ for $r=2,6$ and $Y(\alpha e_3+e_4)\in S^\alpha$
and in this case $\alpha=\frac{1}{2}\pm\frac{1}{2}i$. Therefore, the subspace $\langle e_2,\alpha e_3+e_4,e_6\rangle$ is invariant under $Y$ if and only if $\lambda_1^2=-q$.\\
Since the sixth and fifth components of  $Y(e_4)$ are not zeros, it follows that $Y(e_4)\not\in\langle e_3,e_4,e_5\rangle$ and $Y(e_4)\not\in\langle e_3,e_4,e_6\rangle$. Thus, the subspaces $\langle  e_3,e_4,e_5\rangle$ and $\langle  e_3,e_4,e_6\rangle$ are not invariant under $Y$.\\
Assume that $Y(e_3)\in\langle e_3,e_5,e_6\rangle$. Then the fourth component of $K_3$ is zero. So, $\lambda_1=\frac{-1\pm i\sqrt{15}}{4}$. But for this value of $\lambda_1$ and by direct calculation,  the first component of $K_3$ is $-\frac{9}{8}\pm\frac{21\sqrt{5}}{16}+i\left(\pm\frac{9\sqrt{3}}{16}\pm\frac{3\sqrt{15}}{8}\right)\neq0$. This is a contradiction. Thus the subspace $\langle e_3,e_5,e_6\rangle$ is not invariant under $Y$.\\
Since the third component of $K_4$ is not zero it follows that $Y(e_4)\not\in\langle e_4,e_5,e_6\rangle$. So, the subspace $\langle e_4,e_5,e_6\rangle$ is not invariant under $Y$.\\
Consider the subspace $V=\langle e_5,\alpha e_3+e_4,e_6\rangle$. By using Mathematica, we show that $Y(e_6)\in V$ if and only if $\lambda_1^2=-q^2$. 
Moreover,  $$\alpha=\left\{\begin{array}{c}\frac{2}{11}(4+i-3\lambda_1-\lambda_1^2)\;\;\text{ for } \lambda_1=iq \\
\frac{2}{11}(4-i-3\lambda_1-\lambda_1^2)\;\;\text{ for } \lambda_1=-iq
\end{array}
\right.$$
Also, we show that if $\lambda_1^2=-q^2$ then $Y(e_5)\in V$ and $Y(\alpha e_3+e_4)\in V$ for
$$\alpha=\left\{\begin{array}{c}\frac{2}{11}(4+i-3\lambda_1-\lambda_1^2)\;\;\text{ for } \lambda_1=iq \\
\frac{2}{11}(4-i-3\lambda_1-\lambda_1^2)\;\;\text{ for } \lambda_1=-iq
\end{array}
\right..$$
Therefore, $\langle e_5,\alpha e_3+e_4,e_6\rangle$ is invariant under $Y$ if and only if $\lambda_1^2=-q^2$.
\end{proof}
%%%% PROPOSITION 4
\begin{prop}
The representation $\rho$ has no invariant subspaces of dimension 4 if and only if $\lambda_1^3\neq q^2$.
\end{prop}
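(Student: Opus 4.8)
The plan is to run the same enumeration-and-elimination scheme used in Propositions 3.1--3.3, now for $4$-dimensional subspaces. Since $X$ is diagonal with the eigenvalue $q^{2}$ occupying the two-dimensional eigenspace $\langle e_3,e_4\rangle$ and all other eigenvalues simple, every $X$-invariant $W$ splits as the direct sum of its intersections with the eigenspaces. Thus the $X$-invariant subspaces of dimension $4$ fall into three families according to $d:=\dim\bigl(W\cap\langle e_3,e_4\rangle\bigr)$: if $d=2$ then $W=\langle e_i,e_j,e_3,e_4\rangle$ with $\{i,j\}\subset\{1,2,5,6\}$; if $d=1$ then $W=\langle e_r,e_s,\alpha e_3+e_4,e_t\rangle$ with $\{r,s,t\}\subset\{1,2,5,6\}$ and $\alpha\in\mathbb{C}$ (together with the degenerate choice $e_3$ in place of $\alpha e_3+e_4$); and if $d=0$ then $W=\langle e_1,e_2,e_5,e_6\rangle$. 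I would first write out this complete list, mirroring the dimension-$3$ list of Proposition 3.3.

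Next I would discard every candidate that is $Y$-invariant for no value of $\lambda_1$, exactly as before: for a given $W$ one tests whether $Y(e_i)=K_i\in W$ for each spanning standard vector, ruling $W$ out as soon as some $K_i$ has a nonzero entry in a coordinate direction absent from $W$. The explicit columns $K_1,\dots,K_6$ make each such test a one-line inspection of a single numerator. I expect every member of the family $d=2$ and the subspace $\langle e_1,e_2,e_5,e_6\rangle$ to be eliminated this way, along with all but one of the $d=1$ family, leaving a single surviving candidate $\langle e_r,e_s,\alpha e_3+e_4,e_t\rangle$ whose $Y$-invariance forces a polynomial identity in $\lambda_1$ that collapses, via $q^{3}=1$, to $\lambda_1^{3}=q^{2}$.

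For the converse I would assume $\lambda_1^{3}=q^{2}$, solve the two linear conditions coming from the ``missing-coordinate'' components of the relevant $K_i$ to determine the unique $\alpha$ (a rational function of $\lambda_1,q$, in analogy with $\alpha=(-1-\lambda_1)(1+q\lambda_1)$ in Proposition 3.2), and then verify directly --- by the same Mathematica computation used throughout --- that $Y(e_r),Y(e_s),Y(e_t)$ and $Y(\alpha e_3+e_4)$ all lie in $W$. The main obstacle is precisely this final verification: confirming that the single $\alpha$ fixed by two of the components simultaneously annihilates \emph{all} remaining out-of-subspace components, so that the full invariance system is consistent exactly on the locus $\lambda_1^{3}=q^{2}$. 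The only delicate bookkeeping is that the denominators appearing in $K_1,\dots,K_6$ stay nonzero, which is automatic since $\lambda_1^{3}=q^{2}$ already excludes $\lambda_1\in\{-1,1,q,q^{2}\}$.

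As a conceptual guide and independent check, note the duality with Proposition 3.2. A $4$-dimensional $\rho$-invariant subspace is the annihilator of a $2$-dimensional subspace invariant under the contragredient $\rho^{*}$, whose diagonal generator $X^{-1}$ has eigenvalues $q^{2},\lambda_1^{-1},q,q,\lambda_1,1$. This is again a representation from the family of Theorem 2.1, but with the role of $q$ played by $q^{2}$: the doubled eigenvalue is now $q=(q^{2})^{2}$, while the free parameter remains $\lambda_1$. Proposition 3.2 applied with $q$ replaced by $q^{2}$ then asserts that $\rho^{*}$ has a $2$-dimensional invariant subspace if and only if $\lambda_1^{3}=q^{2}$, which is precisely the stated criterion and predicts the answer before any computation.
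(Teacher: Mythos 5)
Your plan is correct and coincides with the paper's own proof: the paper enumerates exactly this list of $X$-invariant $4$-dimensional candidates, eliminates all but $\langle\alpha e_3+e_4,e_1,e_5,e_6\rangle$ by inspecting single components of the columns $K_i$, and shows that this last one is invariant precisely when $\lambda_1^3=q^2$, in which case $\alpha=-\lambda_1$. One caution on your closing duality heuristic: matching the eigenvalues of $X^{-1}$ against those of the family only pins down the free parameter of $\rho^{*}$ up to the swap $\lambda_1\leftrightarrow\lambda_1^{-1}$, and the other choice would predict $\lambda_1^3=q$ rather than $\lambda_1^3=q^2$, so that check is not independent evidence without actually verifying which equivalence holds.
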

\begin{proof}
The subspaces of dimension 4 that are invariant under $X$ are $\langle e_i,e_j,e_k,e_r\rangle$ and $\langle \alpha e_3+e_4,e_s,e_t,e_h\rangle$ for $\alpha\in\mathbb{C}$, $1\leqslant i<j<k<r\leqslant6$ and $s,t,h\in\{1,2,5,6\}$ with $s<t<h$.\\
Since the fifth and sixth components of $Y(e_1)$ are not zeros, it follows that the subspaces of the form $\langle e_1,e_2,e_3,e_i\rangle$, $\langle e_1,e_2,e_4,e_j\rangle$, $\langle e_1,e_3,e_4,e_j\rangle$, $\langle \alpha e_3+e_4, e_1,e_2,e_5\rangle$ and $\langle\alpha e_3+e_4, e_1,e_2,e_6\rangle$ are not invariant under $Y$ for $i=4,5,6$, $j=5,6$ and all $\alpha\in\mathbb{C}$.\\
The subspace  $\langle e_1,e_4,e_5,e_6\rangle$ is not invariant under $Y$ because the third component of $Y(e_4)$ is not zero.\\
Since the third component of $Y(e_2)$ is not zero, it follows that the subspaces $\langle e_1,e_2,e_5,e_6\rangle$ and $\langle e_2,e_4,e_5,e_6\rangle$ are not invariant under $Y$.\\
Assume that the subspace $\langle e_1,e_3,e_5,e_6\rangle$ is invariant under $Y$ then, $Y(e_1)\in\langle e_1,e_3,e_5,e_6\rangle$. So, the second and fourth components of $Y(e_1)$ are zeros. Hence, $\lambda_1^3=q^2$ and $\lambda_1=-q^2$. Thus, $-q^6=q^2$. But, this contradicts the fact that $q$ is a third root of unity. Thus, $\langle e_1,e_3,e_5,e_6\rangle$ is not invariant under $Y$.\\
Note that the sixth and fifth components of $Y(e_4)$ are not zeros. So, the subspaces $\langle e_2,e_3,e_4,e_r\rangle$ are not invariant under $Y$ for $r=5,6$.\\
Since $\lambda_1\neq-1$ it follows that the fourth component of $K_2$ is not zero and $Y(e_2)\not\in\langle e_2,e_3,e_5,e_6\rangle$. Therefore, the subspace $\langle e_2,e_3,e_5,e_6\rangle$ is not invariant under $Y$.\\
Suppose that the subspace $\langle e_3,e_4,e_5,e_6\rangle$ is invariant under $Y$. Then the first and second components of $Y(e_3)$ are zeros. This implies that 
$$\left\{\begin{array}{c}2+q-(1+2q)\lambda_1^2=0\\
1-\lambda_1^3q^2=0
\end{array}
\right.,$$
Thus, $\lambda_1^2=-q$ and $\lambda_1^3=q$ but, this contradicts the fact that $q$ is a primitive third root of unity. Therefore, $\langle e_3,e_4,e_5,e_6\rangle$ is not invariant under $Y$.\\
Consider the subspace $S=\langle\alpha e_3+e_4, e_1,e_5,e_6\rangle$, where $\alpha\in\mathbb{C}$. Suppose that $S$ is invariant under $Y$, then $Y(e_1)\in S$. Then, the second component of $K_1$ is zero. So $\lambda_1^3=q^2$. Conversely, if $\lambda_1^3=q^2$, then the second component of $K_1$ is zero and 
$$\frac{\text{The third component of }K_1}{\text{The fourth component of }K_1}=\frac{-\lambda_1 + q}{-\lambda_1^2 q^2 + 1}=\frac{(-\lambda_1 + q)\lambda_1}{-\lambda_1^3q^2+\lambda_1}=\frac{(-\lambda_1 + q)\lambda_1}{-\-q+\lambda_1}=-\lambda_1.$$
Thus, $Y(e_1)\in S$ and $\alpha=-\lambda_1$. Also, by direct computation and using Mathematica, we show that the second component of each of $Y(e_5)$, $Y(e_6)$ and $Y(-\lambda_1 e_3+e_4)$ is zero. As well as we show that the ratio of the third component to the fourth one of each of these vectors is $-\lambda_1$. This means that $Y(e_5)$, $Y(e_6)$ and $Y(-\lambda_1 e_3+e_4)$ are in $S$.
Therefore, $S$ is invariant under $Y$ if and only if $\lambda_1^3=q^2$ and in this case $\alpha=-\lambda_1$
 and $S=\langle-\lambda_1e_3+e_4,e_1,e_5,e_6\rangle$.\\
It remains to prove that the subspace $S'=\langle\alpha e_3+e_4,e_2,e_5,e_6\rangle$ is not invariant under $Y$ for all $\alpha\in\mathbb{C}$.
Suppose that $S'=\langle\alpha e_3+e_4,e_2,e_5,e_6\rangle$ is invariant under $Y$ for some $\alpha\in\mathbb{C}$, then $Y(e_2)\in S'$.
Then the first component of $K_2$ is zero. Thus, $\lambda_1^2=\frac{2+q}{1+2q}=-q$ (as $q$ is a third root of unity).
Substitute the obtained value of $\lambda_1^2$ in the numerator of the first component of $K_5$ to get
$$ \lambda_1(-2q-1+(2+q)(-q)+(2+q^2)(-q)\lambda_1+(2q+1)q^2\lambda_1)=-3q\lambda_1(1+\lambda_1)\neq0.$$
Hence, $Y(e_5)=K_5\not\in S'$, a contradiction.
\end{proof}
%PROPOSITION 5
\begin{prop}
If $\lambda_1^3\neq q^2$ then the representation $\rho$ has no  invariant subspaces of dimension 5.
\end{prop}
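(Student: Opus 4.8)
The plan is to mirror the strategy of Proposition 3.1, but now for subspaces of codimension $1$. Since any subspace invariant under $\rho$ is in particular invariant under $X=\rho(\sigma_1)$, I would first list all $5$-dimensional $X$-invariant subspaces. As $X$ is diagonal with the eigenvalue $q^2$ repeated on coordinates $3$ and $4$, every $X$-invariant hyperplane is the kernel of an eigen-functional of $X$, so these subspaces are exactly
$$H_j=\{v:v_j=0\}\quad(j=1,2,5,6)\qquad\text{and}\qquad H_{a,b}=\{v:av_3+bv_4=0\}\ \ ((a,b)\neq(0,0)),$$
where the one-parameter family $H_{a,b}$ absorbs the two coordinate hyperplanes $\{v_3=0\}$ and $\{v_4=0\}$. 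It then remains to test each of these for invariance under $Y=\rho(\sigma_2)$. For a coordinate hyperplane $H_j$ this amounts to requiring that the $j$-th component of $K_i$ vanish for every $i\neq j$ (equivalently, that row $j$ of $Y$ be supported on the diagonal); for $H_{a,b}$ it amounts to requiring $a(K_i)_3+b(K_i)_4=0$ for the generators $e_i\in H_{a,b}$.

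Next I would dispatch the four isolated hyperplanes. For $H_5$ and $H_6$ the test fails at once, since the fifth component $(K_1)_5=\lambda_1^2/\bigl((\lambda_1-1)^2(\lambda_1+1)(\lambda_1-q)q(\lambda_1q-1)\bigr)$ and the sixth component $(K_1)_6=1/\bigl(3(\lambda_1-1)^2(q^2-q)\bigr)$ are both nonzero, so $K_1=Y(e_1)\notin H_5,H_6$. For $H_1$ I would reuse the computation already carried out in the proof of Proposition 3.4: $(K_2)_1=0$ forces $\lambda_1^2=-q$, and substituting this into the numerator of $(K_5)_1$ gives $-3q\lambda_1(1+\lambda_1)\neq0$, so the components $(K_2)_1$ and $(K_5)_1$ cannot vanish simultaneously. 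The hyperplane $H_2$ is where the hypothesis enters: the second component $(K_1)_2$ has numerator $\lambda_1^3-q^2$, so $(K_1)_2=0$ if and only if $\lambda_1^3=q^2$; under the assumption $\lambda_1^3\neq q^2$ we get $K_1\notin H_2$, hence $H_2$ is not invariant. This is the only point at which the hypothesis $\lambda_1^3\neq q^2$ is used, which is consistent with its appearance in Proposition 3.4.

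I expect the one-parameter family $H_{a,b}$ to be the main obstacle, since there the free ratio $a:b$ forbids reading off invariance from a single entry. The key observation that makes it manageable is that $(K_1)_3=(K_2)_3=-1/\bigl(3(q^2-\lambda_1)\bigr)$; subtracting the two invariance equations $a(K_1)_3+b(K_1)_4=0$ and $a(K_2)_3+b(K_2)_4=0$ then yields $b\bigl((K_1)_4-(K_2)_4\bigr)=0$. A direct computation shows that $(K_1)_4=(K_2)_4$ forces $\lambda_1=q$, which is excluded, so $b=0$; feeding this back gives $a(K_1)_3=0$ with $(K_1)_3\neq0$, whence $a=0$. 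Thus no nonzero functional $ae_3^\ast+be_4^\ast$ cuts out a $Y$-invariant hyperplane, and the entire family $H_{a,b}$ is eliminated. Collecting the five cases shows that, under $\lambda_1^3\neq q^2$, no $5$-dimensional subspace is invariant under both $X$ and $Y$, which completes the proof.
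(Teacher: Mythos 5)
Your proof is correct and follows essentially the same route as the paper: the same enumeration of the $X$-invariant hyperplanes and the same component tests for $H_1,H_2,H_5,H_6$ (the paper's $S_1,S_2,S_5,S_6$), including the reuse of $(K_2)_1=0\Rightarrow\lambda_1^2=-q$ against the numerator $-3q\lambda_1(1+\lambda_1)$ of $(K_5)_1$, and the use of the hypothesis $\lambda_1^3\neq q^2$ only through the numerator $\lambda_1^3-q^2$ of $(K_1)_2$. The one place you diverge is the $\langle e_3,e_4\rangle$-family, which the paper splits into $S_3$, $S_4$ and $S^\alpha$ and settles for $S^\alpha$ by equating the ratios $(K_1)_3/(K_1)_4$ and $(K_2)_3/(K_2)_4$ to obtain $(q^2-1)(\lambda_1^2+\lambda_1+1)=0$; your subtraction argument exploiting $(K_1)_3=(K_2)_3$ treats all three cases uniformly and avoids dividing by fourth components that could a priori vanish --- a tidier but equivalent computation.
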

\begin{proof}
The possible subspaces of dimension 5 that are invariant under $X$ are: $S_6=\langle e_1,e_2,e_3,e_4,e_5\rangle$, $S_5=\langle e_1,e_2,e_3,e_4,e_6\rangle$, $S_4=\langle e_1,e_2,e_3,e_5,e_6\rangle$,
$S_3=\langle e_1,e_2,e_4,e_5,e_6\rangle$, $S_2=\langle e_1,e_3,e_4,e_5,e_6\rangle$, $S_1=\langle e_2,e_3,e_4,e_5,e_6\rangle$ and $S^{\alpha}=\langle e_1,e_2,\alpha e_3+e_4,e_5,e_6\rangle$ for $\alpha\in\mathbb{C}$.\\
Since the third, fifth and sixth components of $K_1$ are not zeros, it follows that $Y(e_1)\not\in S_i$ for $i=3,5,6$. So, the subspaces $S_i$ are not invariant under $Y$ for $i=3,5,6$.\\
Assume that $S_4$ is invariant under $Y$, then $Y(e_1)\in S_4$ and $Y(e_2)\in S_4$. So the fourth components of $K_1$ and $K_2$ are zeros. So, $\lambda_1=-q^2$ and $\lambda_1=-1$ which is impossible because $q$ is a primitive third root of unity. So, $S_4$ is not invariant under $Y$.\\
Since $\lambda_1^2\neq q^3$, the second component of $K_1$ is not zero. So, $Y(e_1)\not\in S_2$. Thus, $S_2$ is not invariant under $Y$.\\
Assume that $S_1$ is invariant under $Y$, then $Y(e_2)\in S_1$. So, the first component of $K_2$ is zero. Thus, $\lambda_1^2=\frac{2+q}{1+2q}=-q$ (since $q$ is a primitive third root of unity). Substitute in numerator of the first component of $K_5$ to get $\lambda_1(-3q-3q\lambda_1)$, which is not zero for $\lambda_1^2=-q$. So $Y(e_5)\not\in S_1$. Therefore, $S_1$ is not invariant under $Y$. \\
It remains to show that $S^\alpha$ is not invariant under $Y$ for all $\alpha\in\mathbb{C}$.
Assume, for some $\alpha\in\mathbb{C}$, that $S^\alpha$ is invariant then, $Y(e_1)$ and $Y(e_2)$ belong to $S^\alpha$. So,
$$\frac{\text{The third component of }K_1}{\text{The fourth component of }K_1}=\frac{\text{The third component of }K_2}{\text{The fourth component of }K_2}=\alpha.$$ 
This implies that
$$\frac{-\lambda_1+q}{-\lambda_1^2 q^2 + 1}=\frac{\lambda_1 - q}{(1 + \lambda_1) (\lambda_1 - q^2)}.$$
So, $$(q^2-1)(\lambda_1^2+\lambda_1+1)=0.$$
Hence, $\lambda_1=q\text{ or }q^2$, a primitive third root of unity. This is a contradiction because $\lambda_1\neq q\text{ and }\lambda_1\neq q^2$. Therefore, $S^\alpha$ is not invariant under $Y$ for all $\alpha\in\mathbb{C}$.
\end{proof}

\begin{thm}
For $\lambda_1\in\mathbb{C}\setminus\{-1,1,q,q^2\}$, the representation $\rho$ is irreducible if and only if $\lambda_1^2\neq -q$, $\lambda_1^2\neq -q^2$,  $\lambda_1^3\neq q$ and $\lambda_1^3\neq q^2$.
\end{thm}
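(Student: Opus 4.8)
The plan is to assemble the five preceding propositions, using the elementary fact that a representation on a $6$-dimensional space is irreducible precisely when it admits no invariant subspace of dimension $1$, $2$, $3$, $4$, or $5$. Throughout I would work under the standing hypothesis $\lambda_1\in\mathbb{C}\setminus\{-1,1,q,q^2\}$, which is exactly what guarantees that the eigenvectors $u_1,\dots,u_6$ are linearly independent, so that the conjugation to the pair $(X,Y)$ is legitimate and Propositions 3.1--3.5 all apply. I would then prove the two implications of the biconditional separately.

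For necessity, I would argue by contraposition against each of the three ``if and only if'' propositions that carry an exceptional locus. Suppose one of the four excluded equalities holds. If $\lambda_1^3=q$, then Proposition 3.2 exhibits an invariant subspace of dimension $2$; if $\lambda_1^2=-q$ or $\lambda_1^2=-q^2$, then Proposition 3.3 exhibits an invariant subspace of dimension $3$; and if $\lambda_1^3=q^2$, then Proposition 3.4 exhibits an invariant subspace of dimension $4$. In each case $\rho$ has a nontrivial invariant subspace and is therefore reducible. Hence irreducibility forces all four inequalities simultaneously.

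For sufficiency, I would assume all four conditions hold and rule out invariant subspaces dimension by dimension. Proposition 3.1 excludes dimension $1$ unconditionally. Since $\lambda_1^3\neq q$, Proposition 3.2 excludes dimension $2$; since $\lambda_1^2\neq -q$ and $\lambda_1^2\neq -q^2$, Proposition 3.3 excludes dimension $3$; since $\lambda_1^3\neq q^2$, Proposition 3.4 excludes dimension $4$. With no proper nonzero invariant subspace in any dimension, $\rho$ is irreducible.

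I do not expect a genuine computational obstacle, since all the hard work has already been discharged inside Propositions 3.1--3.5 and the theorem is essentially their logical synthesis. The one point requiring care is dimension $5$: Proposition 3.5 supplies only a sufficient condition, not a biconditional, so one must verify that its hypothesis is among the four assumed conditions and thus leaves no dimension-$5$ case open. This is indeed so, because the hypothesis of Proposition 3.5 is precisely $\lambda_1^3\neq q^2$, which is one of our four inequalities; consequently Proposition 3.5 applies directly and excludes dimension $5$ as well, completing the sufficiency direction.
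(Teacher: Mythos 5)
Your proposal is correct and matches the paper's proof, which simply cites Propositions 3.1--3.5 and leaves the logical assembly implicit. Your explicit observation that Proposition 3.5 is only a sufficient condition, but that its hypothesis $\lambda_1^3\neq q^2$ is already among the four assumed inequalities, is exactly the one point of care the paper glosses over.
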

\begin{proof}
It follows directly from Proposition 3.1, Proposition 3.2, Proposition 3.3, Proposition 3.4 and Proposition 3.5.
\end{proof}
\section{Reducible Representation}
By Theorem 3.6, the representation $\rho$ is reducible if and only if $\lambda_1^2=-q$, $\lambda_1^2=-q^2$, $\lambda_1^3=q$ or $\lambda_1^3=q^2$. For $\lambda_1^3=q^2$, $\rho$ is reducible and  the subspace $V$, that is generated by the vectors $-\lambda_1e_3+e_4,\;e_1,\;e_5,\;e_6$, is invariant under $\rho$. Let us write the matrices  representing $\rho(\sigma_1)$ and $\rho(\sigma_2)$ relative to the basis $\{-\lambda_1e_3+e_4,e_1,e_5,e_6,e_4,e_2\}$. 
Let $A=(-\lambda_1e_3+e_4,e_1,e_5,e_6,e_4,e_2)$ be the transition matrix. Then,
$$A=\begin{pmatrix}
0&1&0&0&0&0\\
0&0&0&0&0&1\\
-\lambda_1&0&0&0&0&0\\
1&0&0&0&1&0\\
0&0&1&0&0&0\\
0&0&0&1&0&0\\
\end{pmatrix}
$$
The matrix representing $\sigma_1$ in this basis is
$$A^{-1}XA=\begin{pmatrix}
q^2&0&0&0&0&0\\
0&q&0&0&0&0\\
0&0&\lambda_1^{-1}&0&0&0\\
0&0&0&1&0&0\\
0&0&0&0&q^2&0\\
0&0&0&0&0&\lambda_1\\
\end{pmatrix}
$$
and the matrix representing $\sigma_2$ in the same basis is 
$$A^{-1}YA=\left(C_1\;C_2\;C_3\;C_4\;C_5\;C_6\right),$$
where $$C_1=\begin{pmatrix}
\frac{(1+2q^{1/3}+2q^{2/3} + 2 q +2 q^{4/3})}{3 q (1 + q^{1/3} + q^{2/3} + q)} \\
-\frac{q (2 + q^{2/3}+q +2 q^{5/3}+ 3 q^{7/3})}{3 (-1 + q^{5/3})}\\
\frac{q+q^{5/3} + q^{7/3}}{(-1 + q^{1/3})^3 (1 + q^{1/3})^2 (1 + q^{2/3}) (-1+q^{5/3})}\\
-\frac{q (1 - q^{1/3} + q)}{3 (-1 + q^{1/3})^3 (1 + q^{1/3})^2}\\
0\\
0
\end{pmatrix},\;
C_2=\begin{pmatrix}
\frac{1}{3 q^{4/3} (-1 + q^{4/3})} \\
\frac{q}{-1 + q^{1/3} +q + q^{5/3} - q^{7/3} - q^{2/3}}\\
\frac{q^{1/3}}{(-1 + q^{2/3})^2 (1 + q^{2/3}) (q^{2/3} - q) (-1 + q^{5/3})}\\
\frac{1}{3 (-1 + q^{2/3})^2 (-1 + q) q}\\
0\\
0
\end{pmatrix},$$

$$C_3=\begin{pmatrix}
\frac{(1+2q)(1-q^{4/3})+(2+q)q^{2/3}}{3(1+q)(-q^{2/3}+q^2)} \\
\frac{q^2(1+q^{2/3})}{(q^{2/3}-q)(-1 + q^{5/3})}\\
0\\
-\frac{q^{2/3} (1 + 2 q^2)}{3 (-1 + q^{2/3})^2 (-1 + q)}\\
0\\
0
\end{pmatrix},\;
C_4=\begin{pmatrix}
 -\frac{2 + q + (q-1)(q^{4/3} + 2q^{5/3})}{(q^2-1)q^{5/3}(-q^{2/3}+q^2)}\\
\frac{3(q^{4/3}- q^{8/3} + q)}{-1 + q^{1/3} + q^{5/3} - q^{7/3} - q^{2/3} + q}\\
\frac{(-1 + q) q^{7/3} (q^{4/3} (2 + q)-q^2(1+2 q))}{(-1+q^{2/3})^2 (1 + q^{2/3}) (q^{2/3} -q) (-1 + q^{5/3})}\\
\frac{q^{2/3} - q^{8/3} - (2 + q) + q^{4/3} (-1 + q^2)}{3(-1 + q^{2/3})^2}\\
0\\
0
\end{pmatrix},$$

$$C_5=\begin{pmatrix}
\frac{2 q^{2/3}}{3 (-1 + q^{4/3}}\\
-\frac{q (2 + 2 q^{1/3} + 5 q^{2/3} + 5 q + 2 q^{4/3} + 2 q^{5/3}))}{3 (-1 + q^{5/3}))}\\
 -\frac{q^{7/3}}{(-1 + q^{2/3})^2 (1 + q^{2/3}) (q^{2/3} - q) (-1 + q^{5/3}))}\\
 -\frac{q}{3(-1 + q^{2/3})^2 (-1 + q))}\\
 \frac{q^{2/3}}{1-q^{4/3}}\\
 \frac{1 + q^{2/3} + q^{4/3}}{1 - q^{1/3} + q^{2/3} -q}
\end{pmatrix} \text{ and }
C_6=\begin{pmatrix}
\frac{1}{3 q^{4/3} (-1 + q^{4/3})}\\
\frac{q (2 + q - q^{4/3} (1 + 2 q))}{3 (q^{2/3} - q) (-1 + q^{5/3})}\\
 \frac{q}{(-1 + q^{1/3})^2 (q^{4/3}-1)(1-q^{5/3})}\\
\frac{1}{3 (-1 + q^{2/3})^2 (-1 + q)q}\\
-\frac{q + q^{4/3}+2q^2 - q^{1/3}}{3 - 3 q^{4/3}}\\
-\frac{1 + q^{4/3} + q^{8/3}}{(-1 + q^{1/3})^3 (1 + q^{2/3}) (q + q^{4/3})^2}

\end{pmatrix}.
$$

The restriction $\rho_V$ of $\rho$ to the subspace $V$ is given by:
$\sigma_1\mapsto X'=\begin{pmatrix}
q^2&0&0&0\\
0&q&0&0\\
0&0&\lambda_1^{-1}&0\\
0&0&0&1\\
\end{pmatrix}
$ and 
$\sigma_2\mapsto Y'=\left(E_1\;E_2\;E_3\;E_4\right)$, where
$$E_1=\begin{pmatrix}
\frac{1+2q^{1/3}+2q^{2/3} + 2 q +2 q^{4/3}}{3 q (1 + q^{1/3} + q^{2/3} + q)} \\
-\frac{q (2 + q^{2/3}+q +2 q^{5/3}+ 3 q^{7/3})}{3 (-1 + q^{5/3})}\\
\frac{q+q^{5/3} + q^{7/3}}{(-1 + q^{1/3})^3 (1 + q^{1/3})^2 (1 + q^{2/3}) (-1+q^{5/3})}\\
-\frac{q (1 - q^{1/3} + q)}{3 (-1 + q^{1/3})^3 (1 + q^{1/3})^2}
\end{pmatrix},\;
E_2=\begin{pmatrix}
\frac{1}{3 q^{4/3} (-1 + q^{4/3})} \\
\frac{q}{-1 + q^{1/3} +q + q^{5/3} - q^{7/3} - q^{2/3}}\\
\frac{q^{1/3}}{(-1 + q^{2/3})^2 (1 + q^{2/3}) (q^{2/3} - q) (-1 + q^{5/3})}\\
\frac{1}{3 (-1 + q^{2/3})^2 (-1+q)q}
\end{pmatrix},
$$

$$E_3=\begin{pmatrix}
\frac{(1+2q)(1-q^{4/3})+(2+q)q^{2/3}}{3(1+q)(-q^{2/3}+q^2)}\\
\frac{q^2(1+q^{2/3})}{(q^{2/3}-q)(-1 + q^{5/3})}\\
0\\
-\frac{q^{2/3} (1 + 2 q^2)}{3 (-1 + q^{2/3})^2 (-1 + q)}
\end{pmatrix}\text{ and }
E_4=\begin{pmatrix}
-\frac{2 + q + (q-1)(q^{4/3} + 2q^{5/3})}{(q^2-1)q^{5/3}(-q^{2/3}+q^2)}\\
\frac{3(q^{4/3}- q^{8/3} + q)}{-1 + q^{1/3} + q^{5/3} - q^{7/3} - q^{2/3} + q}\\
\frac{(-1 + q) q^{7/3} (q^{4/3} (2 + q)-q^2(1+2 q))}{(-1+q^{2/3})^2 (1 + q^{2/3}) (q^{2/3} -q) (-1 + q^{5/3})}\\
\frac{q^{2/3} - q^{8/3} - (2 + q) + q^{4/3} (-1 + q^2)}{3(-1 + q^{2/3})^2}
\end{pmatrix}.$$
\begin{thm}
The representation $\rho_V$ is an irreducible representation of $B_3$ of dimension 4.
\end{thm}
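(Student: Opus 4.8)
The plan is to exploit the fact that $X'$ is diagonal. First I would verify that its four eigenvalues $q^2,q,\lambda_1^{-1},1$ are pairwise distinct. Since $q^3=1$ and $q\neq1$ we have $q^2\neq q$ and $q^2\neq1$; and since $\lambda_1^3=q^2$ forces $\lambda_1\notin\{1,q,q^2\}$, the remaining comparisons hold as well (for instance $\lambda_1^{-1}=q^2$ would give $\lambda_1=q^{-2}=q$, using $q^3=1$). Consequently the only subspaces of $\mathbb{C}^4$ invariant under $X'$ are the coordinate subspaces $W_S=\langle e_i:i\in S\rangle$ with $S\subseteq\{1,2,3,4\}$, and so $\rho_V$ is irreducible precisely when no proper nonzero $W_S$ is invariant under $Y'$.

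Next I would translate invariance into a condition on the entries of $Y'=(E_1\ E_2\ E_3\ E_4)$: the subspace $W_S$ is $Y'$-invariant if and only if every entry $(Y')_{ji}$ with $i\in S$ and $j\notin S$ vanishes, that is, the off-support block $Y'[\,S^c,S\,]$ is the zero matrix. The key structural observation is that the only entry of $Y'$ vanishing identically is the $(3,3)$ entry (the third component of $E_3$), and this entry is diagonal. Since the index $3$ cannot lie in both $S$ and $S^c$, this zero never occurs inside any off-support block. Hence, once one knows that every off-diagonal entry of $Y'$ is nonzero, each block $Y'[\,S^c,S\,]$ with $S$ proper and nonempty contains a nonzero entry, so no proper nonzero $W_S$ is invariant.

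Concretely I would record the nonvanishing case by case, in the style of Propositions 3.1--3.5: for each singleton $\{i\}$ the column $E_i$ has a nonzero entry off the $i$-th row; for each dimension-$3$ subspace, with one-element complement $\{j\}$, the $j$-th row of $Y'$ has a nonzero entry off the diagonal; and for each of the six two-element sets $S$ the $2\times2$ block $Y'[\,S^c,S\,]$ has all its entries nonzero. All of these reduce to the single claim that the twelve off-diagonal entries of $Y'$ are nonzero when $\lambda_1^3=q^2$. (Should some individual off-diagonal entry happen to vanish, one falls back to checking that the four entries of each $2\times2$ block do not vanish simultaneously, but the clean route is the uniform nonvanishing statement.)

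The main obstacle is precisely this nonvanishing verification. The entries of $Y'$ are written in terms of fractional powers $q^{1/3},q^{2/3},\dots$, reflecting the substitution $\lambda_1=q^{2/3}$ (a fixed cube root of $q^2$) coming from $\lambda_1^3=q^2$. I would substitute this value, clear denominators, and reduce each numerator using $q^3=1$ (with $q\neq1$) to a concrete element of $\mathbb{Q}(q^{1/3})$, checking it is nonzero; the bookkeeping is routine but lengthy and is best carried out by direct symbolic computation, as elsewhere in the paper. Granting the twelve nonvanishings, the block criterion above shows that no proper nonzero coordinate subspace is $Y'$-invariant, and therefore $\rho_V$ is irreducible.
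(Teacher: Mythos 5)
Your proposal is correct and follows essentially the same route as the paper: reduce to coordinate subspaces via the diagonal $X'$ with distinct eigenvalues, then observe that the only vanishing entry of $Y'$ is the diagonal $(3,3)$ entry, so every off-support block of a proper coordinate subspace contains a nonzero entry. Your explicit remark that a diagonal zero can never lie in an off-support block is a detail the paper's one-line justification leaves implicit, but the argument is the same.
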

\begin{proof}
The vectors $f_1=(1,0,0,0)^T$, $f_2=(0,1,0,0)^T$, $f_3=(0,0,1,0)^T$ and $f_4=(0,0,0,1)^T$ are the eigenvectors of $\sigma_1$ corresponding to the eigenvalues $q^2$, $q$, $\lambda_1^{-1}$ and 1 respectively. Note that each component of $Y'(f_i)=E_i$ is not zero for all $i=1,2,3,4$ except the third component of $E_3$. This implies that all the proper subspaces spanned by any subset of $\{f_1,f_2,f_3,f_4\}$ are not invariant under $Y'$. Hence, $\rho_V$ has no invariant proper subspaces. So, $\rho_V$ is irreducible.
\end{proof}
\section{Conclusion}
We consider a family of representations of $B_3$ constructed by $q$-Pascal's triangle \cite{Al}. We then specialize the parameters used in defining these
representations to non-zero complex numbers.
Kosyak mentioned in \cite{Ko} that the irreducibility of these representations is an open problem for dimensions $\geq6$.
We determine a necessary and sufficient condition for the irreducibility of representations when their dimension is 6.
In addition, we present an irreducible representation of the braid group $B_3$ of dimension 4;
a representation obtained by reducing one of those reducible representations presented in this work.

\bigskip

%\textbf{Conflict of Interest.} On behalf of all authors, the corresponding author states that there is no conflict of interest.

\end{document}